\documentclass[10pt,reqno]{amsart}
\textheight=22cm
\textwidth=14.3cm
\hoffset=-0.9cm
\voffset=-1cm
 
\usepackage[colorlinks=true,
pdfstartview=FitV, linkcolor=cyan, citecolor=magenta,
urlcolor=]{hyperref}

\usepackage{amsmath,amsfonts,latexsym,amssymb}
\usepackage{amssymb,amsfonts, amsmath}
\usepackage{mathrsfs}
\usepackage[latin1]{inputenc}
\usepackage[T1]{fontenc}
\usepackage{ae,aecompl}
\usepackage{braket}
\usepackage{comment}
\usepackage{amssymb,amsfonts, amsmath}
 \usepackage{mathtools} 
\usepackage[all,arc]{xy}
\usepackage{enumerate}
\usepackage{mathrsfs}
\usepackage{mathabx}
\usepackage{amssymb}
\usepackage{dsfont}
\usepackage[dvipsnames]{xcolor}
\usepackage{graphicx}
\usepackage{subfig}

\usepackage{verbatim} 
\usepackage{amssymb,amsfonts, amsmath}
\usepackage[all,arc]{xy}
\usepackage{enumerate}
\usepackage{mathrsfs}
\usepackage{hyperref}
\usepackage{xstring}
\usepackage{amsmath}
\usepackage[makeroom]{cancel}
\usepackage{cleveref}
\newtheorem{theorem}{Theorem}[section]
\newtheorem{lemma}[theorem]{Lemma}
\newtheorem{proposition}[theorem]{Proposition}

\newtheorem{observation}[theorem]{Observation}
\newtheorem{rmk}[theorem]{Remark}

\renewcommand{\leq}{\leqslant}
\renewcommand{\geq}{\geqslant}

\long\def\@savemarbox#1#2{\global\setbox#1\vtop{\hsize\marginparwidth 
  \@parboxrestore\tiny\raggedright #2}}
\marginparwidth .75in \marginparsep 7pt

\usepackage{braket}
\usepackage{amsmath}
\usepackage[title]{appendix}

\makeatletter
\renewcommand*\env@matrix[1][\arraystretch]{%
\edef\arraystretch{#1}%
\hskip -\arraycolsep
\let\@ifnextchar\new@ifnextchar
\array{*\c@MaxMatrixCols c}}
\makeatother
\AtEndDocument{\bigskip{\footnotesize%
\textsc{Faculty of Mathematics, Karlsruhe Institute of Technology, Englerstra\ss e 2, 76131 Karlsruhe, Germany} \par
\textit{E-mail address}: \texttt{claudio.llosa@kit.edu}

 \medskip
 \textsc{Max Planck Institute for Mathematics in the Sciences, Inselstra{\ss}e 22, 04103 Leipzig, Germany} \par  
 \textit{E-mail address}: \texttt{konstantinos.tsouvalas@mis.mpg.de}}}
\frenchspacing

\title[]{Subgroups of CAT(0) groups, exotic finiteness properties and non-QI-embeddings into linear groups}

\author{Claudio Llosa Isenrich}
\author{Konstantinos Tsouvalas}

\subjclass[2020]{20F67, 20F65, 22E40}
\keywords{Finiteness properties, discrete subgroups of Lie groups, quasi-isometric embeddings, non-positively curved groups}

\begin{document}

\frenchspacing

\maketitle

\begin{abstract}
    For every positive integer $n$ we construct an example of a subgroup $L< G$ of a linear ${\rm CAT}(0)$ group $G$ such that $L$ is of finiteness type $\mathcal{F}_{n-1}$ and not $\mathcal{F}_n$, and $L$ does not admit a representation into $\mathsf{GL}_d(k)$ which is a quasi-isometric embedding for any local field $k$. We further prove that there is a faithful representation of $L$ into some $\mathsf{GL}_{\ell}(\mathbb{C})$ which is not the restriction of any representation of $G$. This generalises a family of fibre products of type $\mathcal{F}_2$ not $\mathcal{F}_3$ with these properties constructed by the second author.
\end{abstract}

\section{Introduction}

Some of the most studied classes of groups in geometric group theory are non-positively curved groups and linear groups. Important examples of non-positively curved groups include hyperbolic groups and ${\rm CAT}(0)$ groups. They are known to share many strong properties. Given a non-positively curved group, it is natural to ask if its subgroups inherit these properties. This has driven much interesting research recently. Similarly, linear groups share many strong properties and a natural geometric strengthening of linearity is to be a quasi-isometrically embedded subgroup of some general linear group. This raises the problem which linear groups can be quasi-isometrically embedded in some (potentially different) general linear group over a local field. Since the associated metric spaces (symmetric spaces and Bruhat--Tits buildings) are ${\rm CAT}(0)$, this problem seems particularly natural for ${\rm CAT}(0)$ groups and their subgroups.

Motivated by this theme and a recent heightened interest in the construction of pathological subgroups via exotic finiteness properties, the goal of this work is to show that there are subgroups of linear ${\rm CAT}(0)$ groups with arbitrary exotic finiteness properties which do not admit any representation into a general linear group over a local field which is a quasi-isometric embedding. In particular, we will prove that they can be chosen so that the ``enveloping'' ${\rm CAT}(0)$ group does admit such a representation. 

\begin{theorem}\label{thm:Main}
    For every positive integer $m$ there is a ${\rm CAT}(0)$ group $G$ with a subgroup $L<G$ with the following properties:
    \begin{enumerate}
        \item $L$ is of type $\mathcal{F}_{m-1}$ and not of type $\mathcal{F}_m$;
        \item $G$ admits a quasi-isometric embedding into some $\mathsf{GL}_{s}(\mathbb{C})$ for some $s\in \mathbb{N}$;
        \item there is no local field $k$ and $d\in \mathbb{N}$, for which $L$ admits a representation into $\mathsf{GL}_d(k)$ which is a quasi-isometric embedding; and
        \item there is $\ell\in \mathbb{N}$ and a faithful irreducible representation of $L$ into $\mathsf{GL}_{\ell}(\mathbb{C})$ which does not extend to a representation of $G$ into $\mathsf{GL}_{\ell}(\mathbb{C})$. 
    \end{enumerate}
\end{theorem}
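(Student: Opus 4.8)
The plan is to realise $L$ as a fibre product — a Stallings--Bieri-type kernel — inside a direct product $G=\Gamma_1\times\Gamma_2$ of groups that are simultaneously ${\rm CAT}(0)$, linear, and quasi-isometrically embeddable into a complex general linear group, for which finite products of nonabelian free groups are tailor-made. Concretely, take $\Gamma_1=F_2^{\,m-1}$ and $\Gamma_2=F_2$, let $\phi_i\colon\Gamma_i\twoheadrightarrow\mathbb{Z}$ send every free generator to $1$, and set
\[
L=\{(g_1,g_2)\in\Gamma_1\times\Gamma_2\ :\ \phi_1(g_1)=\phi_2(g_2)\},\qquad G=\Gamma_1\times\Gamma_2\cong F_2^{\,m}.
\]
Then $G$ acts geometrically on a product of $m$ trees, hence is ${\rm CAT}(0)$. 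Fixing a convex-cocompact (Schottky) representation of $F_2$ into $\mathsf{SL}_2(\mathbb{R})$ — so that each free factor is quasi-isometrically embedded in the hyperbolic plane $\mathbb{H}^2=\mathsf{SL}_2(\mathbb{R})/\mathsf{SO}(2)$ — and assembling these block-diagonally into a representation $G\to\mathsf{SL}_2(\mathbb{R})^{m}\hookrightarrow\mathsf{SL}_{2m}(\mathbb{R})\subset\mathsf{GL}_{2m}(\mathbb{C})$, the $G$-orbit in the symmetric space of $\mathsf{SL}_{2m}(\mathbb{R})$ lies on the totally geodesic subspace $(\mathbb{H}^2)^{m}$, on which the induced metric is the product metric; so this representation is a quasi-isometric embedding, giving $(2)$ with $s=2m$. (For the flexibility needed in $(4)$ one may instead let $\Gamma_1$ be a suitable Bestvina--Brady factor, or pass to a twisted fibre product; nothing below changes.)

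For $(1)$: note $L\cong\ker(\,\Sigma\colon F_2^{\,m}\to\mathbb{Z}\,)$, the Bestvina--Brady kernel of $F_2^{\,m}$ associated to the flag complex $(S^0)^{\ast m}$, a triangulation of $S^{m-1}$; since $\widetilde H_\ast(S^{m-1};\mathbb{Q})$ is concentrated in degree $m-1$, the Bestvina--Brady/Bieri criterion gives that $L$ is of type $\mathcal F_{m-1}$ but not $FP_m(\mathbb{Q})$, and the latter passes to all finite-index subgroups. Equivalently, from the short exact sequence $1\to\ker\phi_1\to L\to\Gamma_2\to1$ obtained by projecting to the second coordinate — with $\Gamma_2$ free and $\ker\phi_1$ of type $\mathcal F_{m-2}$ but not $FP_{m-1}(\mathbb{Q})$ — the known results on the finiteness length of fibre products over $\mathbb{Z}$ (whose base $K(\mathbb{Z},1)=S^1$ is as simple as possible for such statements) yield the same conclusion. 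This is $(1)$.

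The main obstacle is $(3)$: excluding \emph{every} quasi-isometric embedding into \emph{every} $\mathsf{GL}_d(k)$, over \emph{every} local field $k$. Here I would establish, and then apply to $L$, a quasi-isometric rigidity criterion for subgroups of linear groups over local fields, extending the one the second author used in the type-$\mathcal F_2$-not-$\mathcal F_3$ case: a quasi-isometric embedding $\rho\colon L\to\mathsf{GL}_d(k)$ yields, after passing to the Zariski closure of $\rho(L)$ and to a finite-index subgroup of $L$, an isometric action on the associated ${\rm CAT}(0)$ symmetric space or Bruhat--Tits building whose orbit map is still a quasi-isometric embedding; analysing this action through the Cartan projection and the coarse geometry of that space, and feeding in the fibre-product description of $L$, one reaches a contradiction with the exotic finiteness behaviour of $L$ (which fails $FP_m(\mathbb{Q})$ even virtually). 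The criterion must rest on more than the finiteness length of $L$ alone, so the fibre-product structure is used essentially; making the argument uniform in $m$ and handling the non-archimedean fields through their buildings is where the real work lies.

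For $(4)$: since $\Gamma_2=F_2$ is free, $\phi_2$ splits, so $\Gamma_2=K\rtimes\langle s\rangle$ with $K:=\ker\phi_2\leq L$, and conjugation by $s$ induces a ``shift'' automorphism of $K$. Any representation of $G$ restricted to $K$ is the restriction to $K$ of a representation of $\Gamma_2$, hence is isomorphic to its own shift (the isomorphism being implemented by the image of $s$). So it suffices to produce a faithful irreducible representation $\rho\colon L\to\mathsf{GL}_\ell(\mathbb{C})$ whose restriction to $K$ is \emph{not} isomorphic to its shift: such a $\rho$ cannot extend to $G$. I would build it by starting from a faithful finite-dimensional representation of the linear group $L$ and modifying it — tensoring with or inducing from auxiliary representations and pre-composing with automorphisms — so as to force irreducibility and faithfulness while arranging that $\rho(K)$ is visibly not conjugate to its shift. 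Achieving all three of these at once, for the same $L$ used in $(1)$ and $(3)$, is the delicate part.
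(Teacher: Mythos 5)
Your choice of $L$ is fatally wrong for item~(3). With $G=F_2^{\,m}$ and $\phi\colon G\to\mathbb{Z}$ summing the exponents, your $L$ is exactly the Stallings--Bieri group $SB_m$, and $SB_m$ \emph{does} admit a quasi-isometric embedding into a general linear group: the inclusion $SB_m\hookrightarrow F_2^{\,m}$ is undistorted (for $m=2$ this is Ol'shanskii--Sapir, \cite[Theorem~2]{OlsSap-01}; for $m\geq3$ it follows from Carter--Forester \cite{CarFor-17}), and composing with your own Schottky embedding $F_2^{\,m}\hookrightarrow\mathsf{GL}_{2m}(\mathbb{R})$ gives a QI-embedding of $L$. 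The paper explicitly records this fact precisely to show that exotic finiteness properties, even together with the fibre-product structure over $\mathbb{Z}$, are not enough to rule out QI-embeddings. Consequently the ``rigidity criterion'' you hope to establish in~(3) cannot exist in the form you describe: it would have to distinguish $SB_m$ from its well-behaved self.

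What the paper actually exploits is that the quotient must have a \emph{non-linear Dehn function}, and therefore it fibres over $\mathbb{Z}^2$ rather than $\mathbb{Z}$. The relevant input is \cite[Corollary~1.2]{Tso-22} (Theorem~\ref{estimate-1} here): for any representation $\rho\colon P\to\mathsf{GL}_d(k)$ of a fibre product $P<\Gamma\times\Gamma$, iterated commutators $(1,[h_1,\ldots,h_r])$ with $h_i\in N$ and $r\geq d+1$ have $\|\mu(\rho(\cdot))\|$ bounded linearly in $\sum|h_i|_\Gamma$. When the quotient is $\mathbb{Z}^2$, one can take $h_i$ to be conjugates built out of powers $\overline{x_1}^n,\overline{x_2}^n$ of the two commuting generators of the quotient; the relation $[x_1,x_2]$ forces the corresponding word $w_n\in P$ to have length growing strictly faster than linearly in $n$ (Theorem~\ref{thm:Main-fibre-product-2}, using \cite[Lemma~4.7]{Tso-22}), yielding $\|\mu(\rho(w_n))\|\lesssim|w_n|_P^{1-\frac{1}{2d+3}}$ — an absolute obstruction to any QI-embedding. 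Over $\mathbb{Z}$ there is no commutation relation to exploit and no such distortion. To then \emph{also} control finiteness properties to be $\mathcal{F}_{n-1}$-not-$\mathcal{F}_n$ in degree $n$ rather than degree $2$, the paper needs $\Gamma$ a cocompact lattice in $\mathsf{PU}(n,1)$ with a map onto $\mathbb{Z}^2$ having kernel of higher finiteness type, which is what \cite{LIPy-23,LIP-24} supply; $SB_m$ only enters as a direct factor to shift parity of $m$, with the QI-obstruction still coming from the $\mathsf{PU}(n,1)$ side.

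Your item~(4) argument has a reasonable skeleton (restriction of a $G$-representation to $K=\ker\phi_2$ is isomorphic to its own shift because $s$ normalizes $K$), but it is only a gesture: you never produce the irreducible faithful $\rho$ and never verify the non-isomorphism with its shift. The paper's route — a twist by a character $\pi$ coming from residual $p$-finiteness (Lemma~\ref{lem:cyclic-quotient}) combined with irreducibility of tensor products of proximal strongly irreducible representations (Observation~\ref{irreducible}, Lemma~\ref{non-extension}) — does the actual work of forcing the extension to differ from $\rho_0$ by a homomorphism to $\mathsf{GL}_1$, which is impossible when $\pi$ is nontrivial on a commutator $(1,[a,b])$.
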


This generalises work by the second author \cite{Tso-22} who proved this result for finitely presented subgroups of ${\rm CAT}(0)$ groups which are not $\mathcal{F}_3$. Here we say that a group is of \emph{finiteness type $\mathcal{F}_n$} if it admits a classifying space with a finite $n$-skeleton and that it has \emph{exotic finiteness properties} if it is $\mathcal{F}_{n-1}$ and not $\mathcal{F}_{n}$ for some $n>0$. A classical family of examples of groups of type $\mathcal{F}_{n-1}$ and not $\mathcal{F}_n$ for every $n>0$ are the Stallings--Bieri groups $SB_n$. By definition, $SB_n$ is the kernel of a factorwise surjective homomorphism $F_2 \times \cdots \times F_2 \to \mathbb{Z}$ from a direct product of $n$ $2$-generated free groups onto $\mathbb{Z}$ \cite{Sta-63,Bie-76}. In particular, the Stallings--Bieri groups are naturally subgroups of ${\rm CAT}(0)$ groups. 

To our knowledge, the examples in \Cref{thm:Main} are the first known examples of subgroups of $\textup{CAT}(0)$ groups with arbitrary exotic finiteness properties, all of whose representations into any general linear group over a local field fail to be quasi-isometric embeddings. If one drops the ${\rm CAT}(0)$ condition there is a second family known to experts. It is obtained by taking direct products of the fundamental group of the unit tangent bundle of a closed hyperbolic surface with the Stallings--Bieri groups (see \Cref{unit-tangent} for details).

Our proof of \Cref{thm:Main} relies on the recent construction of subgroups of cocompact complex hyperbolic lattices with exotic finiteness properties by the first author and Py \cite{LIPy-23,LIP-24}. More precisely, a key step in our proof is the following result, which may be of independent interest. 

\begin{theorem}\label{thm:Main-fibre-product}
    Let $n>1$ be an integer and let $\Gamma < \mathsf{PU}(n,1)$ be a cocompact lattice that admits a surjective homomorphism $\phi: \Gamma \to \mathbb{Z}^2$ with kernel of type $\mathcal{F}_{n-1}$ and not $\mathcal{F}_n$. Then the fibre product $P:=\big\{(g_1,g_2)\in \Gamma\times \Gamma \mid \phi(g_1)=\phi(g_2)\big\}< \Gamma \times \Gamma$ has the following properties:
    \begin{enumerate}
        \item $P$ is of type $\mathcal{F}_{2n-1}$ and not $\mathcal{F}_{2n}$; and
        \item there is no local field $k$ such that $P$ admits a representation into $\mathsf{GL}_d(k)$ which is a quasi-isometric embedding.
    \end{enumerate}
    Moreover, for every $n>1$, there exists a cocompact lattice $\Gamma<\mathsf{PU}(n,1)$, which satisfies the hypotheses and, in addition, can be chosen to be torsion-free and residually $p$-finite.
\end{theorem}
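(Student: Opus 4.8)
The plan is to combine three ingredients: the $1$-$2$-$3$ Theorem of Bieri–Bridson–Howie–Miller–Short (and its higher-dimensional refinements) to control the finiteness properties of the fibre product $P$, the known exotic finiteness properties of the kernel $N = \ker\phi$ coming from Llosa Isenrich–Py, and a quasi-isometry obstruction argument to rule out QI-embeddings into $\mathsf{GL}_d(k)$.

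For part (1), I would first observe that $P$ fits into the short exact sequence $1 \to N \times N \to P \to \mathbb{Z}^2 \to 1$, where $N \times N$ is the kernel of the map $P \to \mathbb{Z}^2$, $(g_1,g_2)\mapsto \phi(g_1)=\phi(g_2)$, and also into $1 \to N \to P \to \Gamma \to 1$ via projection to the first factor (with the second factor contributing $N$ as kernel). The key tool is the asymmetric version of the $n$-$(n+1)$-$(n+2)$ Theorem for fibre products associated to maps to $\mathbb{Z}^k$: since $\Gamma$ is a cocompact lattice in $\mathsf{PU}(n,1)$ it is of type $\mathcal{F}_\infty$ (indeed $\mathcal{F}$), and $N$ is of type $\mathcal{F}_{n-1}$ but not $\mathcal{F}_n$. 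The higher fibre product results (Bridson–Howie–Miller–Short, or the Kuckuck version for general coabelian subgroups) then give that $P$ is of type $\mathcal{F}_{2n-1}$; for the failure of $\mathcal{F}_{2n}$ one uses that if $P$ were $\mathcal{F}_{2n}$ then an $n$-fold analogue would force $N$ to be $\mathcal{F}_n$, a contradiction. I would need to be a little careful that the target is $\mathbb{Z}^2$ rather than $\mathbb{Z}$, so the symmetric fibre product over $\mathbb{Z}^2$ is the right object and the relevant statement is the one counting the finiteness type as $2n-1$ (two copies of an $\mathcal{F}_{n-1}$-but-not-$\mathcal{F}_n$ group glued over a group of type $\mathcal{F}$). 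This is the step I expect to require the most care in citing the literature precisely, since the exact form of the higher $1$-$2$-$3$ theorem needed (asymmetric, over $\mathbb{Z}^2$) must match a statement available in \cite{BHMS-09} or \cite{Kuc-14}; I would isolate it as a lemma.

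For part (2), the strategy is: suppose $\rho\colon P \to \mathsf{GL}_d(k)$ is a QI-embedding for some local field $k$. Passing to the Zariski closure and then to a suitable semisimple quotient, and using that $P$ surjects onto $\mathbb{Z}^2$ and contains $N\times N$, one analyses the restriction of $\rho$ to $N \times N$ and to each diagonally embedded copy of $\Gamma$ inside $P$. The point is that a QI-embedding of $P$ restricts to a QI-embedding of the "diagonal" $\Gamma$, which sits in $P$ as a QI-embedded (undistorted) subgroup; but then $N < \Gamma$ would have to be QI-embedded in the image, and since $N$ is a normal subgroup of $\Gamma$ with $\Gamma/N \cong \mathbb{Z}^2$, superrigidity / the structure of discrete subgroups of $\mathsf{GL}_d(k)$ forces $N$ to be distorted (it is an infinitely-presented-type kernel of a map from a hyperbolic-like lattice, hence exponentially distorted in $\Gamma$, or one invokes the fact that $N$ itself cannot QI-embed into any $\mathsf{GL}_d(k)$). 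Concretely I would mirror the argument of \cite{Tso-22}: the obstruction is that $N$ is not finitely presented (when $n\ge 2$ it fails $\mathcal{F}_2$ for $n=2$, and for larger $n$ one still has that $N$ has no QI-embedding into a linear group because of a Dehn function / higher filling obstruction), and QI-embeddings pass to retract-like subgroups. The cleanest route is to show directly that $P$ admits $N \times \{1\}$ or a copy of $N$ as an undistorted subgroup and that $N$ has no QI-embedding into any $\mathsf{GL}_d(k)$ over any local field — the latter following from the Llosa Isenrich–Py analysis of the distortion of these kernels combined with the fact that linear groups over local fields have at most polynomially distorted... — so I would state and use: \emph{if $H$ is undistorted in a group that QI-embeds into some $\mathsf{GL}_d(k)$, then $H$ QI-embeds into some $\mathsf{GL}_{d'}(k)$}, and then quote that $N$ does not. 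This reduction is the conceptual heart; the main obstacle is making the "undistorted copy of $N$ in $P$" claim precise, since the obvious copy $N\times\{1\}$ is \emph{not} obviously undistorted in $P$ — one likely needs instead the diagonal $\Gamma < P$ (which \emph{is} undistorted, being a retract up to finite index via $(g_1,g_2)\mapsto(g_1,g_1)$... this needs $\phi$ to be understood) and then argue inside $\Gamma$.

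Finally, for the "moreover" clause, I would invoke the explicit construction of \cite{LIPy-23, LIP-24}: they produce, for each $n>1$, cocompact lattices $\Gamma < \mathsf{PU}(n,1)$ (arising from branched covers / fibrations over the abelianisation) carrying a surjection to $\mathbb{Z}^2$ — or at least to $\mathbb{Z}^{b}$ with $b\ge 2$ from which one projects to $\mathbb{Z}^2$ — whose kernel has exactly the finiteness type $\mathcal{F}_{n-1}$ not $\mathcal{F}_n$. Torsion-freeness is arranged by passing to a finite-index subgroup (Selberg's lemma), and residual $p$-finiteness for a suitable prime $p$ follows since arithmetic lattices in $\mathsf{PU}(n,1)$ are residually $p$-finite for all but finitely many $p$ (a standard consequence of the congruence topology / Weisfeiler's theorem), and the properties of having a surjection to $\mathbb{Z}^2$ with the right kernel are inherited appropriately or can be re-established on the finite-index subgroup. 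I would close by remarking that the needed lattices are among those explicitly exhibited in \cite{LIP-24}, so this clause is essentially a citation plus Selberg's lemma.
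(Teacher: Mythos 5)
Your outline is correct for the positive half of part (1) and for the ``moreover'' clause, but it has two genuine gaps, and part (2) is handled by a completely different mechanism in the paper.

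For the failure of $\mathcal{F}_{2n}$ in part (1), you propose that ``if $P$ were $\mathcal{F}_{2n}$ then an $n$-fold analogue would force $N$ to be $\mathcal{F}_n$.'' There is no such converse to the $1$-$2$-$3$ theorem or its higher variants: those results give \emph{sufficient} conditions for finiteness properties of the fibre product in terms of the inputs, and they do not propagate backwards. The paper instead observes that $P$ is the kernel of $\Gamma\times\Gamma\to\mathbb{Z}^2$, $(g_1,g_2)\mapsto\phi(g_1)-\phi(g_2)$, and applies L\"uck's theorem \cite[Theorem 3.3(4)]{Luc-98}: such a kernel fails $\mathcal{F}_{2n}$ whenever $\Gamma\times\Gamma$ has non-vanishing $\ell^2$-Betti number in degree $2n$, which follows from the K\"unneth formula for $\ell^2$-Betti numbers and the fact that cocompact lattices in $\mathsf{PU}(n,1)$ have $b^{(2)}_n\neq 0$. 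You would need to replace your argument by this (or an equivalent) homological obstruction.

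For part (2), your reduction does not work as sketched, and indeed you flag the difficulty yourself: $N\times\{1\}$ is \emph{distorted} in $P$ (it is already distorted in $\Gamma$, and $P$ is distorted in $\Gamma\times\Gamma$ to boot), so there is no ``undistorted copy of $N$'' to pass to. Switching to the diagonal $\Gamma<P$ does not help either, since $\Gamma$ itself does QI-embed into $\mathsf{GL}_d(\mathbb{C})$ — that is literally part (2) of Theorem~\ref{thm:Main} — so no contradiction can come from it. Moreover, the intermediate claim that ``$N$ has no QI-embedding into any $\mathsf{GL}_d(k)$'' is not proven anywhere in your sketch and is not what the paper uses; it is not even clear that it is true. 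The paper's actual argument (Theorem~\ref{thm:Main-fibre-product-2}, built on \cite[Corollary 1.2]{Tso-22}) is quantitative and does not pass through any undistorted subgroup: one produces an explicit sequence of nested commutators $w_n\in P$ (built from $[x_1^n,x_2^n]$ where $x_1,x_2$ map to generators of $\mathbb{Z}^2$) for which $|w_n|_P\gtrsim n^{1+\frac{1}{2d+2}}$ — because $\mathbb{Z}^2$ has quadratic Dehn function and hyperbolic groups have linear isoperimetry — while the Cartan projection of $\rho(w_n)$ grows at most linearly in $n$ for \emph{any} representation $\rho:P\to\mathsf{GL}_d(k)$ over \emph{any} local field $k$. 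The sub-linear comparison $\|\mu(\rho(w_n))\|\leq C_\rho |w_n|_P^{1-\frac{1}{2d+3}}$ then directly rules out QI-embeddings. This is the conceptual heart, and it is entirely absent from your proposal.

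The ``moreover'' clause is as you describe: cite \cite[Corollary 5.2]{LIPy-23} for the homomorphism to $\mathbb{Z}^2$ with kernel of the required finiteness type, then pass to a finite-index subgroup to arrange torsion-freeness (Selberg) and residual $p$-finiteness (Platonov), noting that these properties and the finiteness type of the kernel persist on finite-index subgroups.
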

Concrete examples of lattices as in Theorem~\ref{thm:Main-fibre-product} are provided by any ``deep enough'' finite index subgroups of a cocompact lattice of the simplest type.
Since cocompact lattices in $\mathsf{PU}(n,1)$ are ${\rm CAT}(0)$, Theorem \ref{thm:Main-fibre-product} shows that there are subgroups of {\rm CAT}(0) groups which cover an infinite range of exotic finiteness properties and do not admit a quasi-isometrically embedded representation into $\mathsf{GL}_d(k)$ for any local field $k$. However, they do not span the full range of exotic finiteness properties, and, a priori, it could be true that each such representation extends to a representation of $\Gamma\times \Gamma$, which would make it distorted in a trivial way. To address these two points and thus deduce \Cref{thm:Main} from \Cref{thm:Main-fibre-product} we take a direct product of our examples with a Stallings--Bieri group and pass to a suitable finite index subgroup. 

Finally, one may wonder if being of type $\mathcal{F}_{n-1}$ and not $\mathcal{F}_n$ for some (sufficiently large) positive integer $n$ already implies that there is no representation into $\mathsf{GL}_d(k)$ for any local field $k$ which is a quasi-isometric embedding. It is easy to see that such a statement can not be true, even if the group is a subgroup of a ${\rm CAT}(0)$ group. 
Indeed, an example is given by the Stallings--Bieri groups. It is not hard to see that the canonical embedding $SB_n\hookrightarrow F_2\times \cdots \times F_2$ is a quasi-isometric embedding for all $n\geq 2$; for $n=2$ this is \cite[Theorem 2]{OlsSap-01} and for $n\geq 3$ this either follows by similar arguments or is implicit in Carter--Forester's proof that the Stallings--Bieri groups have quadratic Dehn function in \cite{CarFor-17}. Since $F_2\times \cdots \times F_2$ admits a quasi-isometric embedding into $\mathsf{GL}_{2n}(\mathbb{R})$ this implies that $SB_n$ does as well.

\subsection*{Structure} In \Cref{sec:Background-and-Preliminary} we provide some background on finiteness properties, subgroups of hyperbolic groups and Cartan projections, and prove some auxiliary results. \Cref{sec:non-extendability} contains non-extendability criteria for faithful embeddings into general linear groups over $\mathbb{C}$, that we will require to prove \Cref{thm:Main}(4). Finally, in \Cref{sec:Proof-of-Main-results}, we combine the results from the previous sections to prove \Cref{thm:Main,thm:Main-fibre-product} and discuss some related results and examples.

\subsection*{Acknowledgements} We would like to thank Sami Douba for interesting discussions and, in particular, for explaining to us the family of examples obtained from the unit tangent bundle of a closed hyperbolic surface in \Cref{unit-tangent}.

\section{Background and preliminary results}\label{sec:Background-and-Preliminary}
In this section we summarise relevant background material on finiteness properties, subgroups of hyperbolic groups and Cartan projections, and show some preliminary results that we will require to prove \Cref{thm:Main,thm:Main-fibre-product}.

\subsection{Finiteness properties and fibre products}
The finiteness properties $\mathcal{F}_n$ generalise being finitely generated, which is equivalent to $\mathcal{F}_1$, and being finitely presented, which is equivalent to $\mathcal{F}_2$. 

Given two short exact sequences 
\[
1\to N_1\to \Gamma_1 \stackrel{\phi_1}{\to} Q\to 1,
\]
\[
1\to N_2 \to \Gamma_2 \stackrel{\phi_2}{\to} Q \to 1,
\]
their (asymmetric) fibre product is the subgroup $P=\left\{(g_1,g_2)\mid \phi_1(g_1)=\phi_2(g_2)\right\}<\Gamma_1\times \Gamma_2$.

Fibre products are an important source of groups with exotic finiteness properties and interesting additional properties. A key reason for this are two results known as the 0-1-2 Lemma (e.g. \cite[Lemma 18]{Mil-71}) and the 1-2-3 Theorem \cite[Theorem B]{BHMS-13}, \cite{BriBMS}. They say that if $N_1$ and $N_2$ are $\mathcal{F}_0$ (resp. $\mathcal{F}_1$), $\Gamma_1$ and $\Gamma_2$ are $\mathcal{F}_1$ (resp. $\mathcal{F}_2$), and $Q$ is $\mathcal{F}_2$ (resp. $\mathcal{F}_3$), then $P$ is $\mathcal{F}_1$ (resp. $\mathcal{F}_2$). The assertion that the same is also true if we replace these finiteness properties by $\mathcal{F}_n$, $\mathcal{F}_{n+1}$ and $\mathcal{F}_{n+2}$ for $n\geq 3$ is known as the $n-(n+1)-(n+2)$-Conjecture. This conjecture remains open in the general case, but was confirmed in important special cases, including when $Q$ is abelian \cite[Theorem 6.3]{Kuc-14}. In fact, \cite[Theorem 6.3]{Kuc-14} allows for the $N_i$ to have distinct finiteness properties. We state Kuckuck's theorem in slightly higher generality, noting that his proof actually shows this more general case. We use the notation for the fibre product introduced above:
\begin{theorem}\label{thm:coabelian-n-n+1-n+2}
    Let $n,,k,\ell\geq 0$ with $n\geq k+\ell$. Assume that $\Gamma_1$ and $\Gamma_2$ are $\mathcal{F}_n$, $Q$ is abelian, $N_1$ is $\mathcal{F}_k$ and $N_2$ is $\mathcal{F}_{\ell}$. Then $P$ is $\mathcal{F}_{k+\ell+1}$.
\end{theorem}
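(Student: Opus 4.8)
The plan is to reproduce Kuckuck's argument via the theory of Bieri--Neumann--Strebel--Renz ($\Sigma$-)invariants, the only modification being that one keeps track of two possibly distinct indices $k,\ell$ in place of a single one. As a first step I would reduce to the case $Q=\mathbb{Z}^r$: since $Q$ is finitely generated abelian it contains a free abelian subgroup $A\cong\mathbb{Z}^r$ of finite index, and replacing each $\Gamma_i$ by $\Gamma_i':=\phi_i^{-1}(A)$ keeps $N_i\le\Gamma_i'$ with $\phi_i(\Gamma_i')=A$, keeps $\Gamma_i'$ of type $\mathcal{F}_n$ (it has finite index in $\Gamma_i$), and replaces $P$ by its finite-index subgroup $P\cap(\Gamma_1'\times\Gamma_2')$; since the property $\mathcal{F}_m$ passes both ways along finite-index inclusions, we may thus assume $\phi_i\colon\Gamma_i\to\mathbb{Z}^r$ is surjective. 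The key observation is that $P$ is then exactly the kernel of the surjective homomorphism $\psi\colon G:=\Gamma_1\times\Gamma_2\to\mathbb{Z}^r$, $\psi(g_1,g_2)=\phi_1(g_1)-\phi_2(g_2)$, so that $G/P\cong\mathbb{Z}^r$ and $G$ is of type $\mathcal{F}_n$ (a direct product of two groups of type $\mathcal{F}_n$ is of type $\mathcal{F}_n$). By the Bieri--Renz criterion it therefore suffices to show that the subsphere $S(G,P)=\{[\chi]\in S(G):\chi|_P=0\}$ of the character sphere $S(G)$ lies in the invariant $\Sigma^{k+\ell+1}(G)$.

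To establish this I would invoke the product (join) formula for $\Sigma$-invariants. A nonzero character $\chi\colon G\to\mathbb{R}$ vanishes on $P=\ker\psi$ precisely when $\chi=\bar\chi\circ\psi$ for some nonzero $\bar\chi\colon\mathbb{Z}^r\to\mathbb{R}$; its restrictions to the two factors are then $\chi_1:=\chi|_{\Gamma_1}=\bar\chi\circ\phi_1$ and $\chi_2:=\chi|_{\Gamma_2}=-\bar\chi\circ\phi_2$, which are both nonzero (because $\phi_1,\phi_2$ are surjective) and vanish on $N_1$, respectively $N_2$. Hence $[\chi_1]\in S(\Gamma_1,N_1)$ and $[\chi_2]\in S(\Gamma_2,N_2)$, and since $N_1$ is of type $\mathcal{F}_k$ and $N_2$ of type $\mathcal{F}_\ell$, the Bieri--Renz criterion applied to $\Gamma_1$ and $\Gamma_2$ gives $[\chi_1]\in\Sigma^k(\Gamma_1)$ and $[\chi_2]\in\Sigma^\ell(\Gamma_2)$. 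Now $S(G)$ is the join $S(\Gamma_1)\ast S(\Gamma_2)$, and the product formula expresses the complement as $\Sigma^m(G)^c=\bigcup_{p+q=m}\Sigma^p(\Gamma_1)^c\ast\Sigma^q(\Gamma_2)^c$, with the convention $\Sigma^0(\Gamma_i)=S(\Gamma_i)$. Suppose, for contradiction, that $[\chi]\notin\Sigma^{k+\ell+1}(G)$; then $[\chi]\in\Sigma^p(\Gamma_1)^c\ast\Sigma^q(\Gamma_2)^c$ for some $p,q\geq 0$ with $p+q=k+\ell+1$. Since $\chi_1,\chi_2$ are both nonzero, $[\chi]$ lies in neither factor $S(\Gamma_1)$ nor $S(\Gamma_2)$ of the join, so the two coordinates of $[\chi]$, namely $[\chi_1]$ and $[\chi_2]$, satisfy $[\chi_1]\in\Sigma^p(\Gamma_1)^c$ and $[\chi_2]\in\Sigma^q(\Gamma_2)^c$. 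As the $\Sigma$-invariants are nested decreasingly, $[\chi_1]\in\Sigma^k(\Gamma_1)$ forces $p\geq k+1$ and $[\chi_2]\in\Sigma^\ell(\Gamma_2)$ forces $q\geq\ell+1$, whence $p+q\geq k+\ell+2$, a contradiction. Therefore $S(G,P)\subseteq\Sigma^{k+\ell+1}(G)$, and $P$ is of type $\mathcal{F}_{k+\ell+1}$.

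I do not anticipate a genuine obstacle here, as there is no new idea beyond Kuckuck's: the proof amounts to correctly assembling the finite-index reduction, the Bieri--Renz characterisation of the property $\mathcal{F}_m$ for kernels with abelian quotient, and the join formula for $\Sigma$-invariants, together with a little bookkeeping of conventions (for instance $\Sigma^0(\Gamma_i)=S(\Gamma_i)$). The step most likely to need care is checking that the product formula and the Bieri--Renz criterion are available up to level $k+\ell+1$ under the given finiteness hypotheses (the join formula for $\Sigma$-invariants is often quoted only for groups of type $F_\infty$, but the partial versions valid through the relevant degree suffice here). Conceptually, the only point worth isolating is that the part of $\Sigma^{k+\ell+1}(G)^c$ coming from characters that are nontrivial on both factors only ``sees'' the \emph{sum} $k+\ell$ of the two indices; this is exactly what produces the value $\mathcal{F}_{k+\ell+1}$ rather than the weaker $\mathcal{F}_{\min(k,\ell)+1}$ that either factor alone would give.
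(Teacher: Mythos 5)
Your argument is precisely Kuckuck's proof of \cite[Theorem 6.3]{Kuc-14}, which is the proof the paper is implicitly referring to: the paper itself offers no argument and simply asserts that Kuckuck's proof applies. The reduction to $Q\cong\mathbb{Z}^r$, the identification of $P$ as $\ker\psi$, the Bieri--Renz criterion, and the product (join) formula for $\Sigma$-invariants (in the proven direction $\Sigma^m(G_1\times G_2)^c\subseteq\bigcup_{p+q=m}\Sigma^p(G_1)^c\ast\Sigma^q(G_2)^c$, due to Meinert/Bieri--Geoghegan) are all as in Kuckuck, and your bookkeeping of $k$ versus $\ell$ is correct.

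One point worth flagging, however: your proof requires $G=\Gamma_1\times\Gamma_2$ to be of type $\mathcal{F}_{k+\ell+1}$, both to make sense of $\Sigma^{k+\ell+1}(G)$ and to invoke the Bieri--Renz criterion at that level. This forces $n\geq k+\ell+1$, which is indeed Kuckuck's original hypothesis, whereas the statement as printed in the paper has only $n\geq k+\ell$. This is not a defect of your argument but an oversight in the paper's formulation: with $n=k+\ell$ the stated result actually fails in degenerate cases (take $Q=1$, $\Gamma_1=\Gamma_2=N_1$ of type exactly $\mathcal{F}_{k}$ and not $\mathcal{F}_{k+1}$, $N_2=\Gamma_2$ with $\ell=0$; then $P=\Gamma_1\times\Gamma_2$ is not of type $\mathcal{F}_{k+1}$). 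The discrepancy is harmless for the paper's applications, where the groups $\Gamma_i$ are of type $\mathcal{F}_\infty$, but you should state the hypothesis as $n\geq k+\ell+1$ to match what your argument (and Kuckuck's) actually delivers.
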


We will also require the well-known result that finiteness properties behave well under direct products (see e.g. \cite[Section 7.2]{Geo-08}):
\begin{proposition}\label{product-typeF_m}
    Let $\Gamma_1$ and $\Gamma_2$ be groups and let $k\geq \ell \geq 0$ be integers. If $\Gamma_1$ is of type $\mathcal{F}_k$ and $\Gamma_2$ is of type $\mathcal{F}_{\ell}$ and not $\mathcal{F}_{\ell+1}$, then $\Gamma_1\times \Gamma_2$ is of type $\mathcal{F}_{\ell}$ and not $\mathcal{F}_{\ell+1}$.
\end{proposition}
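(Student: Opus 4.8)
The statement consists of two assertions --- that $\Gamma_1\times\Gamma_2$ is of type $\mathcal{F}_\ell$, and that it is not of type $\mathcal{F}_{\ell+1}$ --- which I would establish separately. For the positive assertion, recall that type $\mathcal{F}_k$ implies type $\mathcal{F}_j$ for every $j\leq k$, so the hypothesis $k\geq\ell$ guarantees that both $\Gamma_1$ and $\Gamma_2$ are of type $\mathcal{F}_\ell$; fix aspherical CW-complexes $X_i=K(\Gamma_i,1)$ with finite $\ell$-skeleton for $i=1,2$. Then $X_1\times X_2$ (with the CW topology) is a $K(\Gamma_1\times\Gamma_2,1)$, and every cell of $X_1\times X_2$ of dimension $\leq\ell$ is a product $e\times f$ of a cell $e$ of $X_1$ and a cell $f$ of $X_2$ with $\dim e+\dim f\leq\ell$, hence with $\dim e,\dim f\leq\ell$; there are only finitely many such products, so $X_1\times X_2$ has finite $\ell$-skeleton and $\Gamma_1\times\Gamma_2$ is of type $\mathcal{F}_\ell$.

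For the negative assertion I would argue by contradiction. The maps $\iota\colon\Gamma_2\hookrightarrow\Gamma_1\times\Gamma_2$, $g\mapsto(1,g)$, and $\mathrm{pr}\colon\Gamma_1\times\Gamma_2\twoheadrightarrow\Gamma_2$ satisfy $\mathrm{pr}\circ\iota=\mathrm{id}_{\Gamma_2}$, so $\Gamma_2$ is a retract of $\Gamma_1\times\Gamma_2$. If $\Gamma_1\times\Gamma_2$ were of type $\mathcal{F}_{\ell+1}$, then $\Gamma_2$, being a retract, would also be of type $\mathcal{F}_{\ell+1}$, contradicting the hypothesis; hence $\Gamma_1\times\Gamma_2$ is not of type $\mathcal{F}_{\ell+1}$. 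Note that this half uses nothing about $\Gamma_1$ beyond its existence.

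The one genuinely non-formal ingredient --- and the step I expect to require the most care --- is the fact that the finiteness property $\mathcal{F}_{\ell+1}$ is inherited by retracts. For $\ell+1\leq 2$ this is elementary, since finite generation passes to quotients and finite presentability to retracts; in general one reduces $\mathcal{F}_n$ to the homological condition $FP_n$ (together with the retract-invariance of finite presentability) and invokes the standard homological characterisation of $FP_n$, the key point being that for a retract $H$ of $G$ the inclusion induces a \emph{split} injection $H_k(H;M)\to H_k(G;M)$, split by the map induced by the retraction, so that $H_k(H;-)$ is a natural direct summand of $H_k(G;-)$ and the relevant finiteness condition descends from $G$ to $H$. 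Since the behaviour of the $\mathcal{F}_m$ under direct products is in any case classical, in practice one may simply cite \cite[Section~7.2]{Geo-08} for the whole proposition.
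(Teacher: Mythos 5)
The paper does not actually prove this proposition; it records it as classical and cites \cite[Section~7.2]{Geo-08}, so there is no internal argument to compare yours against. Your two-part argument is correct and is the standard one. The positive half (multiply CW-models with finite $\ell$-skeleta) is complete as written. The negative half correctly isolates the one step that requires a real lemma, namely that $\mathcal{F}_{\ell+1}$ is inherited by retracts, and your sketch of why this holds is the right idea, though the direct-summand statement should be phrased with a little more care about coefficients: for a $\mathbb{Z}[\Gamma_2]$-module $M$, regard it as a $\mathbb{Z}[G]$-module via the retraction $r\colon G:=\Gamma_1\times\Gamma_2\to\Gamma_2$; functoriality of group cohomology applied to the factorisation $\mathrm{id}_{\Gamma_2}=r\circ\iota$ shows that the composite $H^j(\Gamma_2;M)\xrightarrow{\mathrm{inf}}H^j(G;r^*M)\xrightarrow{\mathrm{res}}H^j(\Gamma_2;M)$ is the identity, so $H^j(\Gamma_2;-)$ is a natural direct summand of $H^j(G;r^*(-))$. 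Combined with Brown's (equivalently, the Bieri--Eckmann) criterion characterising $FP_n$ by a (co)homological finiteness condition preserved under passing to direct summands of such functors, this gives that $\Gamma_2$ is $FP_{\ell+1}$; finite presentability of $\Gamma_2$ then follows because $\ker r$ is normally generated in the finitely presented group $G$ by the finite set $\{s\,\iota(r(s))^{-1}:s\in S\}$ for a finite generating set $S$ of $G$. With these small clarifications your proposal is complete, and citing \cite[Section~7.2]{Geo-08} for the whole proposition, as the paper does, is of course also fine.
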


\subsection{Subgroups of hyperbolic groups with exotic finiteness properties}
In recent work the first author and Py \cite{LIPy-23,LIP-24} constructed examples of subgroups of hyperbolic groups of type $\mathcal{F}_{n-1}$ and not $\mathcal{F}_n$ for all positive integers $n>0$, answering an old question of Brady. Their examples are kernels of morphisms from cocompact arithmetic lattices in $\mathsf{PU}(n,1)$ onto $\mathbb{Z}$ or $\mathbb{Z}^2$. We will require a straight-forward consequence of their result for  $\mathbb{Z}^2$. To state it we recall two definitions.

For a prime $p$, we say that a group $G$ is \emph{residually $p$-finite}, if for every element $g\in G\smallsetminus \left\{1\right\}$ there is a homomorphism $\phi: G\to Q$ to a finite $p$-group $Q$ such that $\phi(g)\neq 1$. 

\begin{theorem}\label{thm:LIP-existence}
    For $n\geq 1$ let $\Gamma<\mathsf{PU}(n,1)$ be a cocompact arithmetic lattice with virtually positive first Betti number. Then there exists a finite index torsion-free residually $p$-finite subgroup $\Gamma_0<\Gamma$ and a surjective homomorphism $\phi: \Gamma_0 \to \mathbb{Z}^2$ with kernel of type $\mathcal{F}_{n-1}$ and not $\mathcal{F}_n$.
\end{theorem}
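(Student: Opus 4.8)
The plan is to use the construction of Llosa Isenrich and Py \cite{LIPy-23,LIP-24} as a black box producing a surjection onto $\mathbb{Z}^2$ with kernel of the prescribed finiteness type, and then to pass to a further finite-index subgroup in order to additionally secure torsion-freeness and residual $p$-finiteness without altering the finiteness properties of the kernel.

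First, I would invoke \cite{LIPy-23,LIP-24}. Since $\Gamma<\mathsf{PU}(n,1)$ is a cocompact arithmetic lattice whose first Betti number is virtually positive, the hypotheses of their construction are satisfied: virtual positivity of the first Betti number provides the initial holomorphic map from a finite cover of $\mathbb{B}^n/\Gamma$ to a positive-dimensional target, and the arithmetic structure provides the totally geodesic complex hypersurfaces that control the topology of its fibres. This yields a finite-index subgroup $\Gamma_1\leq \Gamma$ and a surjective homomorphism $\phi_1\colon \Gamma_1\to \mathbb{Z}^2$ whose kernel $N_1:=\ker\phi_1$ is of type $\mathcal{F}_{n-1}$ and not of type $\mathcal{F}_n$. (For $n=1$ this is elementary: a cocompact lattice in $\mathsf{PU}(1,1)$ is virtually a surface group of genus $\geq 2$, which surjects onto $\mathbb{Z}^2$ with infinitely generated free kernel, i.e.\ a kernel of type $\mathcal{F}_0$ but not $\mathcal{F}_1$.)

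Next, I would improve $\Gamma_1$. Being a finitely generated linear group in characteristic zero, $\Gamma$ is virtually torsion-free by Selberg's lemma, and, by a classical fact about finitely generated linear groups (or, concretely, by passing to a deep enough congruence subgroup of the arithmetic group $\Gamma$), it is virtually residually $p$-finite for all but finitely many primes $p$. Fixing such a prime $p$ and intersecting $\Gamma_1$ with a torsion-free finite-index subgroup of $\Gamma$ and with a residually $p$-finite finite-index subgroup of $\Gamma$, I obtain a finite-index subgroup $\Gamma_0\leq \Gamma_1$ that is torsion-free and residually $p$-finite, both of these properties being inherited by subgroups. Restricting $\phi_1$ gives $\phi_0:=\phi_1|_{\Gamma_0}\colon\Gamma_0\to\mathbb{Z}^2$ with image of finite index in $\mathbb{Z}^2$, hence free abelian of rank $2$; post-composing with an isomorphism onto $\mathbb{Z}^2$ produces a surjection $\phi\colon\Gamma_0\to\mathbb{Z}^2$ with $\ker\phi=N_1\cap\Gamma_0$, a subgroup of finite index in $N_1$.

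Finally, I would appeal to the commensurability invariance of the finiteness properties $\mathcal{F}_k$ (see e.g.\ \cite[Section 7.2]{Geo-08}): $N_1\cap\Gamma_0$ is of type $\mathcal{F}_{n-1}$ because it has finite index in $N_1$, which is of type $\mathcal{F}_{n-1}$; and it is not of type $\mathcal{F}_n$, since otherwise its finite-index overgroup $N_1$ would be of type $\mathcal{F}_n$, contradicting the first step. Hence $\phi\colon\Gamma_0\to\mathbb{Z}^2$ has kernel of type $\mathcal{F}_{n-1}$ and not $\mathcal{F}_n$, as required. I do not anticipate a genuine obstacle: all of the substantive content is carried by the cited construction, and the remainder is routine bookkeeping with finite-index subgroups; the one point that deserves care is that preserving the \emph{failure} of $\mathcal{F}_n$ when passing from $\Gamma_1$ to the smaller group $\Gamma_0$ uses stability of $\mathcal{F}_n$ under finite-index \emph{over}groups, not merely under subgroups.
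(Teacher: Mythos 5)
Your proof is correct and follows essentially the same route as the paper's: invoke \cite[Corollary 5.2]{LIPy-23} for the surjection to $\mathbb{Z}^2$, then use Selberg's lemma and residual $p$-finiteness of finitely generated linear groups (Platonov \cite{Platonov-68} in the paper, congruence subgroups in your version) to pass to a further finite-index subgroup, observing that finiteness properties and their failure are commensurability-invariant. Your explicit elementary treatment of the $n=1$ case (infinitely generated free kernel of a surjection from a surface group onto $\mathbb{Z}^2$) is a welcome addition not spelled out in the paper.
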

\begin{proof}
    By \cite[Corollary 5.2]{LIPy-23} there exists a finite-index subgroup $\Gamma_0<\Gamma$ for which there is a surjective homomorphism $\phi : \Gamma_0 \to \mathbb{Z}^2$ with kernel of type $\mathcal{F}_{n-1}$ and not $\mathcal{F}_n$. The group $\Gamma_0$ is finitely generated linear and thus virtually torsion-free by Selberg's Lemma and virtually residually $p$-finite by a result of Platonov \cite{Platonov-68}. Moreover, these two properties, as well as finiteness properties are preserved under passage to finite index subgroups. Thus, restricting $\phi$ to a further finite index subgroup of $\Gamma_0$ and replacing $\mathbb{Z}^2$ by the image of this restriction, yields the desired conclusion.
\end{proof}

This result generalised previous constructions of finitely presented non-hyperbolic subgroups of hyperbolic groups constructed as kernels of morphisms onto $\mathbb{Z}$ \cite{Rip-82, Bra-99, Lod-18, Kro-21, IMM-23, LIP-24, LIMP-24}. For our construction it is important that the quotient group is the non-hyperbolic group $\mathbb{Z}^2$. Intuitively, this is because the non-linearity of its Dehn function implies that the kernel of this quotient morphism is distorted inside the fibre product.

\begin{rmk}\normalfont{Py and the first author also showed in \cite{LIPy-23} that the fundamental groups of the $n$-dimensional Stover--Toledo manifolds constructed as ramified covers of ball quotients in \cite{StoTol-22} admit morphisms onto $\mathbb{Z}^2$ with kernel of type $\mathcal{F}_{n-1}$, but not $\mathcal{F}_{n}$. Moreover, Kropholler and the first author produced examples of subgroups of hyperbolic groups of type $\mathcal{F}_2$ and not $\mathcal{F}_3$ that are kernels of homomorphisms onto $\mathbb{Z}^m$ for any $m\geq 1$ \cite{KroLlo-24}. Our proof of \Cref{thm:Main-fibre-product} can also be applied to produce fibre products of these groups that do not admit a quasi-isometrically embedded representation into $\mathsf{GL}_d(k)$ for any local field $k$.}\end{rmk}

\subsection{Cartan projection on $\mathsf{GL}_{d}(k)$ and quasi-isometric embeddings.} Throughout the remainder of this note $k$ denotes a local field, i.e $\mathbb{R}$, $\mathbb{C}$ or a finite extension of $\mathbb{Q}_p$ or the field of formal Laurent series $\mathbb{F}_q((T))$ over the finite field $\mathbb{F}_q$. We will now introduce the Cartan projection, distinguishing the Archimedean and non-Archimedean cases. For more details and background  we refer to \cite[Chapter 9]{Hel-78} in the Archimedean case and to  \cite{Bruhat-Tits} in the non-Archimedean case. Our exposition and notation closely follows the one in \cite[Section 2.2]{Tso-22}, see also \cite[Section 2]{Kas-08} and \cite[Section 2.1.2]{PozSamWie-21}.

Let $d\geq 2$ and $$\mathfrak{a}^{+}:=\big\{(x_1,\ldots,x_d) \in \mathbb{R}^d:x_1 \geq \ldots \geq x_d\big\}.$$ We equip the vector space $\mathbb{R}^d$ with the standard Euclidean norm $||\cdot||$.  The \emph{Cartan projection} $\mu : \mathsf{GL}_d(k)\to \mathfrak{a}^{+}$ is defined as follows.
\medskip

\noindent (a) {\em $k$ is Archimedean}. For a matrix $g=(g_{ij})_{i,j=1}^{d}$ let $g^{\ast}=(\overline{g_{ji}})_{i,j=1}^d$ be its conjugate transpose and $\ell_1(g)\geq \ell_2(g)\geq \cdots \geq \ell_d(g)$ the moduli of the eigenvalues of $g$ in non-increasing order. For $i=1,\ldots, d$, $\sigma_i(g):=\sqrt{\ell_i(gg^{\ast})}$ is the {\em $i$-th singular value of $g$}. The {\em Cartan projection} is the continuous, proper, surjective map $\mu:\mathsf{GL}_d(k)\rightarrow \mathfrak{a}^{+}$ defined by  $$\mu(g):=\big(\log\sigma_1(g),\ldots,\log \sigma_d(g) \big), \ g\in \mathsf{GL}_d(k).$$ 

\noindent (b) {\em $k$ is non-Archimedean.}  Let $\omega:k \rightarrow \mathbb{Z}\cup \{\infty\}$ be a discrete valuation on $k$ and consider the absolute value $|\cdot|=p^{-\omega(\cdot)}$, where $p\in \mathbb{N}$ is the cardinality of the residue field of $k$. Let $\mathcal{O}_k$ be the ring of integers of $k$ and fix $\tau \in \mathcal{O}_k$ a uniformizer such that $\omega(\tau)=1$.

 The group \hbox{$\mathsf{K}=\mathsf{GL}_d(\mathcal{O}_k)$} is the, up to conjugation, unique maximal compact subgroup of $\mathsf{GL}_d(k)$. A maximal $k$-split torus of $\mathsf{GL}_d(k)$ is the subgroup of diagonal matrices with entries in $k$ with positive Weyl chamber $$\mathsf{A}^{+}=\big \{\textup{diag}\big(a_1,\ldots, a_d\big): |a_1| \geq \ldots \geq |a_d|,\ a_i \in k^{\ast} \big\}.$$ The corresponding Cartan decomposition is $\mathsf{GL}_d(k)=\mathsf{K}\mathsf{A}^{+}\mathsf{K}$, i.e. every matrix $g \in \mathsf{GL}_d(k)$ can be written in the form $g=k_ga_g^{+}k_{g}'$, where $k_g,k_g' \in \mathsf{K}$ and $$a_g^{+} =\textup{diag}\big(\tau^{n_1(g)},\ldots, \tau^{n_d(g)}\big)\in \mathsf{A}^{+}$$ with $n_1(g),\ldots,n_d(g)\in \mathbb{Z}$ and $n_d(g) \geq \ldots \geq n_1(g)$. For $1 \leq i \leq d$, the \hbox{{\em $i$-th singular value of $g$} is} $$\sigma_i(g):=\big|\tau^{n_i(g)}\big|=p^{-n_i(g)}.$$ The Cartan projection $\mu:\mathsf{GL}_d(k) \rightarrow \mathfrak{a}^{+}$ is the map $$\mu(g):=\big(\log \sigma_1(g),\ldots, \log \sigma_d(g) \big), \ g \in \mathsf{GL}_d(k).$$

Let $\mathsf{H}$ be a finitely generated group equipped with a left invariant word metric $d_{\mathsf{H}}$ and $|\cdot|_{\mathsf{H}}:\mathsf{H}\rightarrow \mathbb{R}_{+}$ be the word length function of $d_{\mathsf{H}}$, where $|h|_{\mathsf{H}}=d_{\mathsf{H}}(h,1)$, $h \in \mathsf{H}$. A linear representation $\rho:\mathsf{H}\rightarrow \mathsf{GL}_d(k)$ is called a {\em quasi-isometric embedding} if there exist $C,c>1$, such that $$c^{-1}|h|_{\mathsf{H}}-C\leq \big|\big|\mu(\rho(h))\big|\big|\leq c|h|_{\mathsf{H}}+C \ \ \forall h\in \mathsf{H}.$$

If $k=\mathbb{R}$ or $\mathbb{C}$ (resp. $k$ is non-Archimedean), let $(X,\mathsf{d})$ be the Riemannian symmetric space $\mathsf{GL}_d(k)/K$ (resp. the Bruhat--Tits building on which $\mathsf{GL}_d(k)$ acts properly by isometries) equipped with the Killing metric $\mathsf{d}$ (resp. the metric $\mathsf{d}$ restricting to the Euclidean metric on each apartment of $X$). A linear representation $\rho:\mathsf{H}\rightarrow \mathsf{GL}_d(k)$ is a quasi-isometric embedding if and only if any orbit map $o_{\rho,x_0}:(\mathsf{H},d_{\mathsf{H}})\rightarrow (X,\mathsf{d})$, $o_{\rho,x_0}(\gamma)=\rho(\gamma)x_0, \gamma \in \mathsf{H}$, is a quasi-isometric embedding of metric spaces (see e.g. \cite[Section 2.4]{Kas-08}).

For $h_1,h_2\in \mathsf{H}$ define the commutator $[h_1,h_2]=h_1h_2h_1^{-1}h_2^{-1}$. Inductively, for $h_1,\ldots,h_r\in \mathsf{H}$, their commutator is $[h_1,\ldots,h_r]:=\big[[h_1,\ldots,h_{r-1}],h_r\big]$. We recall here the following upper bound from \cite{Tso-22} for the norm of the Cartan projection of representations of fiber products over local fields.

\begin{theorem}\textup{(\cite[Corollary 1.2]{Tso-22})}\label{estimate-1} Let $\mathsf{\Gamma}$ be a finitely generated group and fix a word length function $|\cdot|_{\mathsf{\Gamma}}:\mathsf{\Gamma}\rightarrow \mathbb{R}_{+}$. Suppose that $\mathsf{N}$ is a normal subgroup of $\mathsf{\Gamma}$, let $\phi:\mathsf{\Gamma}\rightarrow \mathsf{\Gamma}/\mathsf{N}$ be the natural projection and $$P:=\big\{(g_1,g_2)\in \mathsf{\Gamma}\times \mathsf{\Gamma}: \phi(g_1)=\phi(g_2)\big\}.$$ For every representation $\rho:P \rightarrow \mathsf{GL}_d(k)$ over a local field $k$, there exists $C>0$, depending only on $\rho$, such that $$\big|\big|\mu(\rho(1,[h_1,\ldots,h_r]))\big|\big|\leq 2^rC\left(\sum_{i=1}^r\big|h_i\big|_{\mathsf{\Gamma}}\right)+C$$ for every $h_1,\ldots,h_r\in \mathsf{N}$ with $r\geq d+1$. \end{theorem}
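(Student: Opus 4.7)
The plan is to express $(1,[h_1,\ldots,h_r])$ as a nested commutator inside $P$ built from atoms in the diagonal $\Delta(\mathsf{\Gamma}) := \{(g,g):g\in\mathsf{\Gamma}\}\subset P$ and the vertical subgroup $\{1\}\times\mathsf{N}\subset P$, and then bound its Cartan projection by subadditivity, leveraging the finite generation of $\mathsf{\Gamma}$ for the diagonal atoms. I would first verify by induction on $r$, starting from the base-case computation $[\Delta(u),(1,v)] = (1,[u,v])$ for $u\in\mathsf{\Gamma}$ and $v\in\mathsf{N}$, the identity in $P$
\[
(1,[h_1,\ldots,h_r]) = \big[\Delta(h_1),\ldots,\Delta(h_{r-1}),(1,h_r)\big].
\]

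Next, from the standard subadditivity $\|\mu(gh)\| \leq \|\mu(g)\|+\|\mu(h)\|$ and $\|\mu(g^{-1})\| = \|\mu(g)\|$ one deduces, by iteration on $r$,
\[
\|\mu([A_1,\ldots,A_r])\| \leq 2^{r-1}\bigl(\|\mu(A_1)\|+\|\mu(A_2)\|\bigr) + \sum_{k=3}^r 2^{r-k+1}\|\mu(A_k)\|,
\]
so in particular the outermost atom contributes only with coefficient $2$. Applying this with $A_i = \rho(\Delta(h_i))$ for $i<r$ and $A_r = \rho(1,h_r)$, together with the Lipschitz estimate $\|\mu(\rho(\Delta(h)))\|\leq C_1|h|_{\mathsf{\Gamma}}$ (coming from $\rho\circ\Delta$ being a linear representation of the finitely generated group $\mathsf{\Gamma}$), reduces the problem to controlling the residual term $\|\mu(\rho(1,h_r))\|$.

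The main obstacle lies here: a priori there is no linear bound of $\|\mu(\rho(1,h_r))\|$ in $|h_r|_{\mathsf{\Gamma}}$, since $\rho|_{\{1\}\times\mathsf{N}}$ is a representation of a possibly distorted subgroup of $\mathsf{\Gamma}$. The hypothesis $r\geq d+1$ is what allows one to circumvent this. One exploits the commuting structure of $P$: the maps $\sigma(h):=\rho(h,1)$ and $\rho_2(h):=\rho(1,h)$ define commuting representations of $\mathsf{N}$ whose product equals $\rho\circ\Delta|_{\mathsf{N}}$, which allows one to rewrite $(1,g)=\Delta(g)\cdot\sigma(g)^{-1}$ for $g\in\mathsf{N}$ and reduce to controlling the ``residual'' $\sigma([h_1,\ldots,h_r])$---an $r$-fold commutator inside the subgroup $\sigma(\mathsf{N})\subset\mathsf{GL}_d(k)$. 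For $r\geq d+1$ one then invokes polynomial identities in dimension $d$ (Cayley--Hamilton/Engel-type arguments on the Zariski closure of $\sigma(\mathsf{N})$) to bound the Cartan projection of this residual linearly in $\sum_i|h_i|_{\mathsf{\Gamma}}$ up to an additive constant depending only on $\rho$. Combining the diagonal and residual contributions yields the stated inequality with $C = C(\rho)>0$; the technical heart of the argument is precisely this dimensional cancellation at commutator depth exceeding $d$.
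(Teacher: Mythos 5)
The paper does not prove this statement at all: it is quoted verbatim from \cite[Corollary 1.2]{Tso-22}, so your proposal has to stand on its own as a proof, and it does not. Your preliminary reductions are fine and standard: the identity $[\Delta(u),(1,v)]=(1,[u,v])$, hence $(1,[h_1,\ldots,h_r])=[\Delta([h_1,\ldots,h_{r-1}]),(1,h_r)]$, the subadditivity $\|\mu(gh)\|\leq\|\mu(g)\|+\|\mu(h)\|$ with $\|\mu(g^{-1})\|=\|\mu(g)\|$, and the linear bound on $\|\mu(\rho(\Delta(h)))\|$ in $|h|_{\mathsf{\Gamma}}$ coming from finite generation of $\mathsf{\Gamma}$. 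But all of this leaves exactly one uncontrolled factor, namely $\rho$ applied to a ``vertical'' element of $\{1\}\times\mathsf{N}$ (or, after your rewriting $(1,w)=\Delta(w)(w,1)^{-1}$, to the element $\sigma([h_1,\ldots,h_r])$ with $\sigma(g)=\rho(g,1)$). Note that this rewriting is circular: bounding $\|\mu(\rho(1,w))\|$ and bounding $\|\mu(\rho(w,1))\|$ are the same problem, transported by the controlled diagonal, so nothing has been gained yet.

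The genuine gap is the final step, which is where the hypothesis $r\geq d+1$ must enter, and your proposed mechanism for it is not a valid principle. There is no ``Cayley--Hamilton/Engel-type'' statement asserting that an $r$-fold group commutator of elements of an arbitrary subgroup of $\mathsf{GL}_d(k)$ with $r\geq d+1$ has Cartan projection bounded linearly in auxiliary word lengths: a Schottky free subgroup of $\mathsf{SL}_2(k)$ already gives nested commutators of arbitrary depth that are generic elements with arbitrarily large $\|\mu\|$. Engel/PI-type vanishing applies to Lie-bracket commutators of nilpotent elements or to unipotent groups, not to group commutators in a general linear group, and the Zariski closure of $\sigma(\mathsf{N})$ carries no relevant nilpotency. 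What actually makes the theorem true is the interplay, at this very step, between the two commuting images $\rho(\mathsf{N}\times 1)$ and $\rho(1\times\mathsf{N})$ and the linearly controlled diagonal $\rho(\Delta(\mathsf{\Gamma}))$: roughly, when $\rho(1\times\mathsf{N})$ acts irreducibly, Schur's lemma forces $\rho(\mathsf{N}\times 1)$ to be scalar, so $\rho(1,w)$ agrees with the controlled $\rho(\Delta(w))$ up to a scalar character that dies on commutators; in the reducible case one passes to an invariant subspace and inducts on $d$, each dimension drop consuming one layer of commutator depth, which is how the threshold $d+1$ and the factor $2^r$ arise. Your sketch mentions the commuting structure but never uses it beyond the circular rewriting, so the technical heart of \cite[Corollary 1.2]{Tso-22} is missing rather than proved.
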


As a consequence of Theorem \ref{estimate-1}, we obtain the following theorem that we use for the proof of \Cref{thm:Main-fibre-product}.

 \begin{theorem}\label{thm:Main-fibre-product-2}
    Let $\mathsf{\Gamma}$ be a Gromov hyperbolic group that admits a surjective homomorphism $\phi: \mathsf{\Gamma} \to \mathbb{Z}^m$, $m\geq 2$, with finitely generated kernel. Consider the fiber product $$P:=\big\{(g_1,g_2)\in \mathsf{\Gamma} \times \mathsf{\Gamma} \mid \phi(g_1)=\phi(g_2)\big\}$$ and fix a left invariant word metric $|\cdot|_P:P\rightarrow \mathbb{R}_{+}$. For every $d\in \mathbb{N}$, there exists a sequence $(w_n)_{n\in \mathbb{N}}$ in $P$, depending only on $d\in \mathbb{N}$, with the following properties: 
    \begin{enumerate}
        \item $|w_n|_P\to \infty$ as $n\to \infty$;
        \item for any local field $k$ and any representation $\rho:P\rightarrow \mathsf{GL}_d(k)$, there exists $C_{\rho}>0$ such that for every $n\in \mathbb{N}$, $$\big|\big|\mu(\rho(w_n))\big|\big|\leq C_{\rho}\big|w_n\big|_P^{1-\frac{1}{2d+3}}.$$
    \end{enumerate} 
    \end{theorem}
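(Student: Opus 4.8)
\emph{The plan.} I would take $w_n$ of the form $(1,c_n)$, with $c_n$ a left‑normed $(d+1)$‑fold commutator of elements of $\mathsf{N}:=\ker\phi$, obtain the upper bound on $\|\mu(\rho(w_n))\|$ directly from Theorem~\ref{estimate-1}, and obtain a matching \emph{strictly super‑linear} lower bound on $|w_n|_P$ by a van Kampen diagram argument that transfers the quadratic isoperimetric behaviour of $\mathbb{Z}^m$ to $P$; this is where \emph{both} hypotheses enter, namely $m\geq 2$ (so that $\mathbb{Z}^m$ has non‑linear Dehn function) and the hyperbolicity of $\mathsf{\Gamma}$ (so that its Dehn function is linear). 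To set up, fix finite generating sets with $S_{\mathsf{N}}\subseteq S_{\mathsf{\Gamma}}$ and record the elementary fact that $P$ is generated by the diagonal $\Delta\mathsf{\Gamma}=\{(g,g)\}$ together with $1\times\mathsf{N}$, since $(g_1,g_2)=(g_1,g_1)(1,g_1^{-1}g_2)$ and $g_1^{-1}g_2\in\mathsf{N}$ whenever $(g_1,g_2)\in P$; as all word metrics on $P$ are bi‑Lipschitz, it is harmless to compute $|\cdot|_P$ with respect to $\Delta S_{\mathsf{\Gamma}}\cup(1\times S_{\mathsf{N}})$. It is also convenient to introduce, for $c\in\mathsf{N}$, the \emph{conjugation cost} $\mathrm{cost}(c):=\min\sum_j\big(2|g_j|_{\mathsf{\Gamma}}+1\big)$ over all factorisations $c=\prod_j g_j t_j g_j^{-1}$ with $g_j\in\mathsf{\Gamma}$, $t_j\in S_{\mathsf{N}}^{\pm1}$.

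Concretely, I would choose $a,b\in\mathsf{\Gamma}$ with $\phi(a),\phi(b)$ linearly independent in $\mathbb{Z}^m$, set $R_n:=[a^n,b^n]\in\mathsf{N}$ (so $|R_n|_{\mathsf{\Gamma}}\leq 4n$, and the defining commutator word of $R_n$ maps under $\phi$ to the boundary loop of a non‑degenerate $n\times n$ parallelogram in $\mathbb{Z}^m$, hence fills over $\mathbb{Z}^m$ only with $\gtrsim n^{2}$ cells), and then take
$c_n:=[h_1^{(n)},\dots,h_{d+1}^{(n)}]$ with $h_i^{(n)}\in\mathsf{N}$ built from $R_n$, from conjugates $g^{j}R_ng^{-j}$ for a fixed $g\in\mathsf{\Gamma}$, and from a fixed $u\in\mathsf{N}$, arranged so that $c_n\neq 1$ for all large $n$ (using that hyperbolicity forces the relevant elements to have transverse axes) and so that $\sum_i|h_i^{(n)}|_{\mathsf{\Gamma}}\lesssim_d n$. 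Put $w_n:=(1,c_n)\in P$; note this depends only on $\mathsf{\Gamma},\phi$ and on $d,n$, not on $\rho$ or $k$.

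The upper bound is then immediate from Theorem~\ref{estimate-1} applied with $r=d+1$: there is $C_\rho>0$ with
$\|\mu(\rho(w_n))\|=\big\|\mu\big(\rho(1,[h_1^{(n)},\dots,h_{d+1}^{(n)}])\big)\big\|\leq 2^{d+1}C_\rho\sum_i|h_i^{(n)}|_{\mathsf{\Gamma}}+C_\rho\lesssim_{\rho,d}n$.
For the lower bound, write $(1,c_n)=p_1\cdots p_L$ as a geodesic word over $\Delta S_{\mathsf{\Gamma}}\cup(1\times S_{\mathsf{N}})$, so $L=|w_n|_P$. Projecting to the first coordinate deletes the $(1\times S_{\mathsf{N}})$‑letters and leaves a word $V$ of length $\leq L$ equal to $1$ in $\mathsf{\Gamma}$; projecting to the second coordinate and rewriting yields $c_n=\prod_j v_j t_j v_j^{-1}$ with $t_j\in S_{\mathsf{N}}^{\pm1}$, at most $L$ factors, and each $v_j$ a prefix of $V$, so $|v_j|_{\mathsf{\Gamma}}\leq L$; hence $\mathrm{cost}(c_n)\leq 3L^2$, i.e. $|w_n|_P\gtrsim\big(\mathrm{cost}(c_n)\big)^{1/2}$. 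Finally one estimates $\mathrm{cost}(c_n)$ from below: from any factorisation $c_n=\prod_j g_j t_j g_j^{-1}$ one gets a word of length $\lesssim_d n+\sum_j(2|g_j|_{\mathsf{\Gamma}}+1)$ which is trivial in $\mathsf{\Gamma}$, hence (hyperbolicity) bounds a van Kampen diagram with linearly many cells; pushing this diagram forward along $\phi$ and filling the images of the finitely many bounded‑length relators of $\mathsf{\Gamma}$ and of the $t_j\in\mathsf{N}$ in $\mathbb{Z}^m$ produces a van Kampen diagram over $\mathbb{Z}^m$ for the commutator word of $c_n$, whose only non‑degenerate contribution is the collection of ($n\times n$) rectangles coming from $R_n$ and its conjugates; the quadratic isoperimetric inequality of $\mathbb{Z}^m$ then forces $\mathrm{cost}(c_n)\gtrsim n^2$, and the nested commutator layers $h_2^{(n)},\dots,h_{d+1}^{(n)}$ are what upgrade this to $\mathrm{cost}(c_n)\gtrsim_d n^{2+1/(d+1)}$. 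Combining the three estimates gives $|w_n|_P\gtrsim_d n^{(2d+3)/(2d+2)}\to\infty$, which is (1), and $\|\mu(\rho(w_n))\|\lesssim_{\rho,d}n\lesssim_d|w_n|_P^{(2d+2)/(2d+3)}=|w_n|_P^{\,1-\frac1{2d+3}}$, which is (2) after enlarging $C_\rho$.

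I expect the genuinely hard step to be the last lower bound on $\mathrm{cost}(c_n)$: the bare quadratic estimate only yields $|w_n|_P\gtrsim n$, i.e. the useless exponent $1$, and upgrading to a strictly super‑linear bound requires choosing $h_2^{(n)},\dots,h_{d+1}^{(n)}$ so that the $(d+1)$‑fold commutator still encodes the $n\times n$ rectangle yet forces the conjugators in \emph{any} cheap factorisation to be spread over a region whose diameter grows with $n$, and then tracking the exponent so that the gain from the $d+1$ nested layers, set against the square‑root loss in $|w_n|_P\gtrsim(\mathrm{cost}(c_n))^{1/2}$, lands exactly at $1-\tfrac1{2d+3}$. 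A routine but necessary auxiliary point is to confirm that $c_n\neq 1$ for all large $n$ and that $\sum_i|h_i^{(n)}|_{\mathsf{\Gamma}}=O_d(n)$, both of which follow from hyperbolicity of $\mathsf{\Gamma}$ and the transversality of the chosen axes.
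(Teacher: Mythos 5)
Your overall architecture coincides with the paper's: take $w_n=(1,c_n)$ with $c_n$ a $(d+1)$-fold commutator of elements of $\mathsf{N}=\ker\phi$ built from $n$-th powers of generators whose $\phi$-images span a rank-two subgroup of $\mathbb{Z}^m$; bound $\|\mu(\rho(w_n))\|\lesssim_{\rho,d} n$ from above by Theorem~\ref{estimate-1}; and bound $|w_n|_P$ from below by a strictly super-linear function of $n$, then divide the exponents. Your upper-bound step is exactly the paper's, and your algebra of exponents at the end $\big(n\lesssim|w_n|_P^{(2d+2)/(2d+3)}=|w_n|_P^{1-1/(2d+3)}\big)$ is correct. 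Your reduction $|w_n|_P\gtrsim\big(\mathrm{cost}(c_n)\big)^{1/2}$ via the geodesic-word/prefix argument is also sound.

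The genuine gap is precisely where you yourself flag it: the lower bound $\mathrm{cost}(c_n)\gtrsim_d n^{2+1/(d+1)}$. Your van~Kampen push-forward argument as sketched only yields $\mathrm{cost}(c_n)\gtrsim n^2$, hence the useless $|w_n|_P\gtrsim n$, and the mechanism by which the $d+1$ nested commutator layers ``upgrade'' the exponent is not given -- it is merely asserted to be achievable by a suitable choice of $h_2^{(n)},\dots,h_{d+1}^{(n)}$. This upgrade is exactly the technical content of the result the paper invokes as a black box, namely \cite[Lemma~4.7]{Tso-22}, which is applied to the explicit sequence
\[
w_n=\Bigg(1,\Bigg[\Big[\big[\overline{x_2}^n,\overline{x_1}^n\big],\overline{x_1}^n\Big],\underbrace{[\overline{x_2}^n,\overline{x_1}^n],\ldots,[\overline{x_2}^n,\overline{x_1}^n]}_{d\ \textup{times}}\Bigg]\Bigg)
\]
and directly gives $|w_n|_P\geq C_1 n^{1+1/(2d+2)}$. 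Without either citing that lemma or supplying the missing combinatorial estimate on fillings of iterated commutator words, your proof is incomplete; the hard part of the theorem has been identified and correctly located, but not proved.
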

   
   \begin{proof} Given the presentation $\mathbb{Z}^m=\big\langle x_1,x_2,\ldots,x_m \ | \ [x_i,x_j], i,j=1,\ldots,k\big\rangle$ and a finite generating set $X:=\{a_1,\ldots, a_r\}$ of $\textup{ker}(\phi)$ there is a presentation (see e.g. \cite[Section 1.4]{BriBMS}) of the hyperbolic group $\mathsf{\Gamma}$ of the form: \begin{align}\label{presentation}\mathsf{\Gamma}=\Big \langle x_1,\ldots,x_m,a_1,\ldots, a_r \ \big | \ [x_1,x_2]W_{12}, \ldots, [x_{m-1},x_{m}]W_{(m-1)m}, W_1',\ldots, W_q'\Big\rangle\end{align} where $W_{12},\ldots, W_{m(m-1)},W_1',\ldots,W_q'$ are words in the normal closure of the set $X$ in the free group $F(X\cup\{x_1,\ldots,x_k\})$ generated by $X\cup \{x_1,\ldots,x_k\}$. Let $\overline{x_1},\overline{x_2}\in \mathsf{\Gamma}$ be the image of $x_1,x_2$ in $\mathsf{\Gamma}$. Since $[{x_1}^n,{x_2}^n]$ lies in the normal closure of $[x_1,x_2]$ in $F(X)$, by applying \cite[Lemma 4.7]{Tso-22} for the presentation (\ref{presentation}) of $\mathsf{\Gamma}$, for the sequence $$w_n:=\Bigg (1, \Bigg[\Big[\big[\overline{x_2}^n,\overline{x_1}^n \big],\overline{x_1}^n \Big],\underbrace{[\overline{x_2}^n,\overline{x_1}^n],\ldots, [\overline{x_2}^n,\overline{x_1}^n]}_{d-\textup{times}}\Bigg] \Bigg)$$ in $\mathsf{\Gamma}\times \mathsf{\Gamma}$, there is $C_1>0$, independent of $n$, such that \begin{align}\label{ineq-1}\left|w_n\right|_P\geq C_1n^{1+\frac{1}{2d+2}}, \ \forall n\in \mathbb{N}.\end{align}

   Now by applying Theorem \ref{estimate-1} for the representation $\rho:P\rightarrow \mathsf{GL}_d(k)$ and the sequence $(w_n)_{n\in \mathbb{N}}$, there is $C_2>0$, depending only on $\rho$, such that $$\big|\big|\mu(\rho(w_n))\big|\big|\leq C_{2}n, \ \forall n \in \mathbb{N}.$$ In particular, by using (\ref{ineq-1}), there is a constant $C_{\rho}>0$, depending only on $\rho$, such that: $$\big|\big|\mu(\rho(w_n))\big|\big|\leq C_{\rho}\big|w_n\big|_P^{1-\frac{1}{2d+3}}, \ \forall n \in \mathbb{N}.$$ This completes the proof of the theorem.\end{proof}

\section{Non-extendability of representations on finite index subgroups of fibre products}
\label{sec:non-extendability}

In this section we prove non-extendability criteria for faithful embeddings into $\mathsf{GL}_{\ell}(\mathbb{C})$ of suitable finite index subgroups of coabelian fibre products. 

Recall that for a group $G$ its lower central series is the descending series $(\gamma_i(G))_{i\geq 1}$, defined by $\gamma_1(G):= G$ and $\gamma_i(G):= [G,\gamma_{i-1}(G)]$ for $i\geq 2$. A group is nilpotent, if there is a $c\geq 1$ such that $\gamma_c(G)=\left\{1\right\}$. Finite $p$-groups are nilpotent. Thus, a residually $p$-finite group is also residually (finite) nilpotent. We use this to prove the following auxiliary lemma.

\begin{lemma}\label{lem:cyclic-quotient}
    Let $\Gamma$ be a group which is residually $p$-finite for some prime $p$, let $\phi : \Gamma \to A$ be a homomorphism onto an abelian group $A$, and let $N=\ker(\phi)$. Then for all elements $g,h\in \Gamma$ with $[g,h]\neq 1$ there is a finite index normal subgroup $N'< N$, such that $N'$ is normal in $\Gamma$, $[g,h]\in N'$, and the projection of $[g,h]$ to $N'/[N',N']$ is non-trivial.
\end{lemma}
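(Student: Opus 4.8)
\textbf{Plan for the proof of Lemma \ref{lem:cyclic-quotient}.}

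The plan is to produce $N'$ as the kernel of a homomorphism from $\Gamma$ onto a finite nilpotent group in which $[g,h]$ survives, and then to improve this so that $[g,h]$ even survives in the abelianisation of the kernel. First I would record the standard fact already noted in the text: since $\Gamma$ is residually $p$-finite and finite $p$-groups are nilpotent, $\Gamma$ is residually (finite nilpotent), so there is a homomorphism $\psi\colon\Gamma\to Q$ onto a finite nilpotent group with $\psi([g,h])\neq 1$. Let $c$ be the nilpotency class of $Q$. The key quantitative point I want to exploit is that the commutator $[g,h]$ lies in $\gamma_2(\Gamma)$, hence $\psi([g,h])\in\gamma_2(Q)$, and so $\psi([g,h])$ is a nontrivial element of $\gamma_2(Q)$; in particular $c\geq 2$.

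The main idea is to kill a carefully chosen piece of the lower central series. Consider the smallest $j$ with $\psi([g,h])\notin\gamma_{j+1}(Q)$; then $2\le j\le c$ and $\psi([g,h])$ has nontrivial image in the abelian group $\gamma_j(Q)/\gamma_{j+1}(Q)$. Now replace $Q$ by $\bar Q := Q/\gamma_{j+1}(Q)$ and $\psi$ by the composition $\bar\psi\colon\Gamma\to\bar Q$; still $\bar\psi([g,h])\neq 1$, and moreover $\bar\psi([g,h])$ lies in $\gamma_j(\bar Q)$, which is now central in $\bar Q$ and is the last nontrivial term of the lower central series of $\bar Q$. Set $N':=\ker(\bar\psi)$ after possibly intersecting with $N=\ker(\phi)$ — more precisely, I would first arrange $N'\subseteq N$ as follows. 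Since $A$ is abelian, $\gamma_2(\Gamma)\subseteq N$, so $[g,h]\in N$ automatically; to guarantee $N'\subseteq N$ one can instead work with the homomorphism $(\bar\psi,\phi_{\mathrm{fin}})\colon\Gamma\to\bar Q\times (\Gamma/N)_{\mathrm{fin}}$ where one replaces $A$ by a finite quotient detecting nothing extra — but the cleanest route is: let $N' := \ker(\bar\psi)\cap N$. Then $N'$ is a finite-index normal subgroup of $\Gamma$ contained in $N$, and $[g,h]\in N'$ iff $\bar\psi([g,h])=1$, which fails; so instead I must be slightly more careful: I want $[g,h]\in N'$, so I should \emph{not} include $[g,h]$ in the kernel.

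Let me restate the correct construction. I want a finite-index $N'\trianglelefteq\Gamma$ with $N'\subseteq N$, $[g,h]\in N'$, and $[g,h]$ nontrivial in $N'/[N',N']$. Take $\bar\psi\colon\Gamma\to\bar Q$ as above with $\bar\psi([g,h])$ a nontrivial \emph{central} element of $\bar Q$ lying in the last term $\gamma_j(\bar Q)$ of the lower central series. Let $R:=\ker(\bar\psi|_N)=\ker(\bar\psi)\cap N$ and put $N':=\langle R,[g,h]\rangle$, the subgroup of $N$ generated by $R$ together with $[g,h]$. Since $\bar\psi([g,h])$ is central of order some $p^a$, the subgroup $N'/R\le \bar Q$ is the cyclic central subgroup generated by $\bar\psi([g,h])$; hence $N'$ has finite index in $\Gamma$ and $[N':R]<\infty$. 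Normality of $N'$ in $\Gamma$ follows because $R\trianglelefteq\Gamma$ and $\bar\psi([g,h])$ is central in $\bar Q$, so conjugates of $[g,h]$ lie in $R\cdot[g,h]^{\mathbb Z}=N'$. Finally, the crucial point: $[g,h]$ is nontrivial in $N'^{\mathrm{ab}}=N'/[N',N']$. For this I use that $\bar\psi([g,h])\in\gamma_j(\bar Q)$ is not just central but lies in the \emph{last} nontrivial term, so it is not a product of commutators of elements of $\bar Q$; consequently $\bar\psi$ descends to a homomorphism $N'/[N',N']\to \bar Q/[\bar Q,\bar Q]$ — wait, that is too lossy. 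Instead observe directly: if $[g,h]\in[N',N']$ then, applying $\bar\psi$, $\bar\psi([g,h])\in[\bar\psi(N'),\bar\psi(N')]=[\,\overline{[g,h]}^{\,\mathbb Z},\overline{[g,h]}^{\,\mathbb Z}\,]=\{1\}$ since $N'/R$ is cyclic (hence its image is abelian), contradicting $\bar\psi([g,h])\neq 1$. Therefore $[g,h]$ has nontrivial image in $N'/[N',N']$, as required.

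\textbf{Main obstacle.} The only genuinely delicate point is ensuring that $N'$, enlarged from $R$ by the single element $[g,h]$, is still normal in $\Gamma$ and still of finite index; both are handled by the observation that $\bar\psi([g,h])$ can be taken central in the finite nilpotent quotient (replace the quotient by a further central quotient if necessary so that the surviving image of $[g,h]$ lands in the last term of the lower central series). Everything else — residual nilpotence from residual $p$-finiteness, $[g,h]\in N$ because $A$ is abelian, and the final non-triviality in $N'^{\mathrm{ab}}$ via the cyclicity of $N'/R$ — is routine.
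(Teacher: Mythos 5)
Your proof is correct and genuinely different from the one in the paper. The paper restricts the residual $p$-finiteness to $N$ to obtain $\psi\colon N\to Q$, filters $N$ by the descending chain $\ker(\psi)\cdot\gamma_\ell(N)$, identifies the $\ell$ where $[g,h]$ drops off, and then replaces $\ker(\psi)$ by a \emph{characteristic} finite-index subgroup $N_0\leq\ker(\psi)$ of $N$ so that the resulting $N'=N_0\cdot\gamma_{\ell'}(N)$ is characteristic in $N$, hence normal in $\Gamma$; the abelian quotient $(N_0\gamma_{\ell'}(N))/(N_0\gamma_{\ell'+1}(N))$ then witnesses $[g,h]\neq 1$ in $N'^{\mathrm{ab}}$. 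You instead keep $\psi$ on all of $\Gamma$, pass to the quotient $\bar Q=Q/\gamma_{j+1}(Q)$ chosen so that $\bar\psi([g,h])$ is a nontrivial \emph{central} element of $\bar Q$, and take $N'$ to be the preimage under $\bar\psi|_N$ of the central cyclic subgroup $\langle\bar\psi([g,h])\rangle$. This buys you normality in $\Gamma$ for free (preimage of a normal subgroup of $\bar Q$ under $\bar\psi\colon\Gamma\to\bar Q$, intersected with the normal subgroup $N$), replacing the paper's characteristic-subgroup trick, and the non-vanishing of $[g,h]$ in $N'^{\mathrm{ab}}$ is immediate because $\bar\psi(N')=\langle\bar\psi([g,h])\rangle$ is cyclic, hence abelian, so $[N',N']\subseteq\ker(\bar\psi)$ while $\bar\psi([g,h])\neq 1$. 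Both proofs are of comparable length; yours is somewhat cleaner on the normality-in-$\Gamma$ step, while the paper's has the mild advantage of never needing to arrange centrality of the image of $[g,h]$. One expository note: the final write-up should excise the false starts and present the construction $N'=(\bar\psi|_N)^{-1}(\langle\bar\psi([g,h])\rangle)$ directly, together with the observation $N'=R\cdot\langle[g,h]\rangle$ with $R=\ker(\bar\psi)\cap N$, rather than arriving at it after two retractions.
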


\begin{proof}
    Since $N$ is the kernel of a homomorphism of $\Gamma$ onto an abelian group, it contains the commutator subgroup $\left[\Gamma,\Gamma\right]$ of $\Gamma$. Thus, for all elements $g,h\in \Gamma$, we have $[g,h]\in N$. 
    
    Since $\Gamma$ is residually $p$-finite, the normal subgroup $N$ of $\Gamma$ is also residually $p$-finite. Since $[g,h]\neq 1$ there exists a homomorphism $\psi: N \to Q$ onto a finite $p$-group such that $\psi([g,h])\neq 1$. Since $[g,h]\in \gamma_1(N)=N$ and finite $p$-groups are nilpotent, this is equivalent to saying that there is some $\ell\geq 1$ such that $[g,h]\in \ker(\psi) \cdot \gamma_{\ell}(N)$, but $[g,h]\notin \ker(\psi)\cdot \gamma_{\ell+1}(N)$. If we only want the subgroup $N'$ to be normal in $N$, then we can choose $N':=\ker(\psi)\cdot \gamma_{\ell}(N)$, since the projection of $[g,h]$ survives in the abelian group $(\ker(\psi) \cdot \gamma_{\ell}(N))/ (\ker(\psi)\cdot \gamma_{\ell+1}(N))\cong \gamma_{\ell}(\psi(N))/\gamma_{\ell+1}(\psi(N))$.
    
    By intersecting all subgroups of index $|Q|$ in $N$, we can find a characteristic finite index subgroup $N_0\unlhd N$ with $N_0\leq \ker(\psi)$. Since the lower central series terms are also characteristic subgroups of $N$, the descending sequence of subgroups $N_0\cdot \gamma_{\ell}(N)\leq N$ is characteristic. Moreover, $[g,h]\in N_0\cdot \gamma_1(N)=N$ and $N_0\cdot \gamma_{\ell}(N)\leq \ker(\psi) \cdot \gamma_{\ell}(N)$ implies that there is $\ell'\leq \ell$ such that $[g,h]\in N_0\cdot \gamma_{\ell'}(N)$ and $[g,h]\notin N_0\cdot \gamma_{\ell'+1}(N)$. Choosing $N':= N_0\cdot \gamma_{\ell'}(N)$ and again using that $(N_0\cdot \gamma_{\ell'}(N))/(N_0\cdot \gamma_{\ell'+1}(N))\cong \gamma_{\ell'}(N/N_0) / \gamma_{\ell'+1}(N/N_0)$ is abelian completes the proof, since characteristic subgroups of $N$ are normal in $\Gamma$.\end{proof}

\begin{rmk}\label{rmk:linear-p-finite}
\normalfont{By a theorem of Platonov \cite{Platonov-68} and Selberg's Lemma, every finitely generated linear group has a finite index subgroup which is residually $p$-finite for some prime $p$ and this finite index subgroup can be chosen to be torsion-free. In particular, we can apply Lemma \ref{lem:cyclic-quotient} to a finite index subgroup of every linear group.}
\end{rmk}
A subgroup $G<\mathsf{GL}_d(\mathbb{C})$ is called \emph{strongly irreducible}, if every finite-index subgroup $H<G$ acts irreducibly on $\mathbb{C}^d$. An element $g\in \mathsf{GL}_d(\mathbb{C})$ is called {\em proximal}, if there exists a unique attracting fixed point $x_{g}^{+}\in \mathbb{P}(\mathbb{C}^d)$ and a repelling hyperplane $V_{g}^{-}$, with $x_{g}^{+}\oplus V_{g}^{-}=\mathbb{C}^d$, such that $\lim_n g^nx=x_{g}^{+}$, for every $x\in \mathbb{P}(\mathbb{C}^d)\smallsetminus \mathbb{P}(V_{g}^{-})$; equivalently, $g$ has a unique eigenvalue of maximal absolute value. A subgroup $G<\mathsf{GL}_d(\mathbb{C})$ is \emph{proximal} if it contains a proximal element; denote by $\Lambda_G$ its proximal limit set, i.e. the closure of attracting fixed points of proximal elements of $G$ in $\mathbb{P}(\mathbb{C}^d)$. If $G$ is irreducible, then there is a basis of attracting fixed points of proximal elements of $G$ and thus $\Lambda_G$ spans $\mathbb{C}^d$.\footnote{To see this, observe that $\Lambda_G$ is invariant under the $G$-action on $\mathbb{P}(\mathbb{C}^d)$, since for $g\in G$ proximal and $h\in G$ the point $h\cdot x_g^+$ is the unique attracting fixed point of $hgh^{-1}\in G$.}

We will need the following observation.

\begin{observation}\label{irreducible} \textup{(i)} Let $G_i<\mathsf{GL}_{d_i}(\mathbb{C})$, $d_i\geq 2$, $i=1,\ldots,r$, be irreducible subgroups containing a proximal element. Then the group $G_1\otimes \cdots \otimes G_r=\{g_1\otimes\cdots \otimes g_r:g_1,\ldots,g_r\in G\}$ is an irreducible subgroup of $\mathsf{GL}(\mathbb{C}^{d_1}\otimes \cdots \otimes \mathbb{C}^{d_r})$.\\
\noindent \textup{(ii)} Let $G<\mathsf{GL}_d(\mathbb{C})$ be a strongly irreducible subgroup and $N\unlhd G$ a normal subgroup containing a proximal element. Then $N<\mathsf{GL}_d(\mathbb{C})$ is also irreducible.\end{observation}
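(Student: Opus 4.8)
The statement has two independent parts, and the plan is to handle (i) by the classical fact that an external tensor product of irreducible representations over an algebraically closed field is irreducible, and (ii) by a Clifford-theoretic decomposition together with the observation that a proximal element of $N$ cannot survive in a nontrivial isotypic multiplicity.

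For (i), I would use Burnside's theorem: since $\mathbb{C}$ is algebraically closed and each $G_i$ acts irreducibly on $\mathbb{C}^{d_i}$, the $\mathbb{C}$-linear span of $G_i$ inside the full matrix algebra $\mathsf{M}_{d_i}(\mathbb{C})$ is all of $\mathsf{M}_{d_i}(\mathbb{C})$. Writing an arbitrary $A_i\in\mathsf{M}_{d_i}(\mathbb{C})$ as a linear combination of elements of $G_i$ and expanding, the span of $\{g_1\otimes\cdots\otimes g_r:g_i\in G_i\}$ contains every elementary tensor $A_1\otimes\cdots\otimes A_r$; since such tensors span $\mathsf{M}_{d_1}(\mathbb{C})\otimes\cdots\otimes\mathsf{M}_{d_r}(\mathbb{C})=\mathsf{M}_{d_1\cdots d_r}(\mathbb{C})$, the group $G_1\otimes\cdots\otimes G_r$ has full matrix span and therefore acts irreducibly. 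The proximality hypothesis is not actually needed for this conclusion. If one prefers an argument in the geometric style of the preceding footnote, one can instead take proximal $p_{i,1},\dots,p_{i,d_i}\in G_i$ whose attracting fixed points form a basis of $\mathbb{C}^{d_i}$, note that each $p_{1,j_1}\otimes\cdots\otimes p_{r,j_r}$ is again proximal with attracting fixed point the tensor of the corresponding eigenlines (these form a basis of $\mathbb{C}^{d_1}\otimes\cdots\otimes\mathbb{C}^{d_r}$), and then show for any nonzero invariant subspace $W$ that one of these attracting points lies in $\mathbb{P}(W)$ — using that the intersection of the repelling hyperplanes over a conjugation-invariant family of proximal elements of an irreducible group is trivial, being an invariant subspace contained in a proper hyperplane — after which irreducibility of each $G_i$ forces the orbit of that point to span.

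For (ii), let $W\subseteq\mathbb{C}^d$ be a nonzero $N$-invariant subspace; I want to conclude $W=\mathbb{C}^d$. Since $N\unlhd G$, each translate $gW$ ($g\in G$) is again $N$-invariant, so picking an irreducible $N$-submodule $S\subseteq\mathbb{C}^d$, the sum $\sum_{g\in G}gS$ is a nonzero $G$-invariant subspace, hence all of $\mathbb{C}^d$, and $\mathbb{C}^d|_N$ is semisimple. Decompose $\mathbb{C}^d=W_1\oplus\cdots\oplus W_m$ into $N$-isotypic components; by Clifford's theorem (in the finite-dimensional setting, which needs no hypothesis on $[G:N]$) $G$ permutes the $W_j$ transitively. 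The kernel $G_0\leq G$ of this permutation action has index dividing $m!$ and contains $N$, so by strong irreducibility of $G$ it acts irreducibly on $\mathbb{C}^d$; since $G_0$ preserves $\bigoplus_j W_j$, this forces $m=1$. Thus $\mathbb{C}^d\cong S^{\oplus e}$ is $N$-isotypic, with $N$ acting via $n\mapsto\rho_S(n)\otimes I_e$. Finally, let $p\in N$ be proximal with top eigenvalue $\lambda$, which has algebraic multiplicity one in $\mathbb{C}^d$ since $p$ is proximal; under the isotypic description every eigenvalue of $p$ on $\mathbb{C}^d$ occurs with multiplicity a multiple of $e$, whence $e=1$ and $\mathbb{C}^d=S$ is an irreducible $N$-module.

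The step needing most care is the use of Clifford's theorem for a possibly infinite-index normal subgroup; this is fine because $\mathbb{C}^d$ is finite-dimensional, so the standard proof (an irreducible $N$-submodule exists, and its $G$-orbit spans) goes through. The place where strong irreducibility of $G$ — rather than plain irreducibility — is essential is precisely the passage to the finite-index subgroup $G_0$: without it the conclusion of (ii) can fail, for instance for an irreducible but imprimitive $G$ with $N$ the kernel of the block permutation.
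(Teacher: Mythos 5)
Your proposal is correct, and it takes a genuinely different route from the paper's in both parts. For (i), the paper argues through proximal limit sets: it shows that $\Lambda_{G_1}\otimes\cdots\otimes\Lambda_{G_r}$ and the corresponding dual set both span $\mathbb{C}^{d_1}\otimes\cdots\otimes\mathbb{C}^{d_r}$, and then applies invariance of a putative proper invariant subspace twice to derive a contradiction. Your Burnside argument is shorter and more elementary: since each $G_i$ is irreducible over the algebraically closed field $\mathbb{C}$, its linear span is the full matrix algebra, so the span of $G_1\otimes\cdots\otimes G_r$ is the full matrix algebra of the tensor space, which forces irreducibility. As you rightly point out, this reveals that the proximality hypothesis is not actually needed in (i); the geometric alternative you sketch afterward is essentially what the paper does. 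For (ii), both proofs begin from the same semisimplicity observation (writing $\mathbb{C}^d$ as a direct sum of $G$-translates of an irreducible $N$-submodule) and both exploit strong irreducibility by passing to a finite-index normal subgroup of $G$ that preserves a canonical decomposition. The mechanisms differ, though. The paper decomposes the proximal limit set $\Lambda_N$ along the irreducible $N$-summands $g_iW$, passes to a finite-index normal $G'$ stabilizing each nonempty piece $\Lambda_N\cap\mathbb{P}(g_jW)$, and concludes $\dim W = d$ from the fact that $\Lambda_{G'}$ spans $\mathbb{C}^d$; this sidesteps the isotypic-multiplicity question entirely. You instead apply Clifford's theorem to the isotypic decomposition, use strong irreducibility to collapse the permutation action and land in a single isotypic block $S^{\oplus e}$, and then invoke proximality to force $e=1$ via the simple top eigenvalue. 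These two uses of proximality encode the same underlying fact (the attracting eigenline of a proximal element of $N$ lies in a single $N$-irreducible summand), but your eigenvalue-multiplicity phrasing is the more standard representation-theoretic one. One small wording slip: "since $G_0$ preserves $\bigoplus_j W_j$" should read "preserves each $W_j$" --- the whole sum is $\mathbb{C}^d$ and trivially preserved; it is the $G_0$-invariance of each individual $W_j$ that, combined with irreducibility of $G_0$, forces $m=1$.
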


\begin{proof} (i) Let $G_{i}^{\ast}:=\{g^{\ast}:g\in G\}$ be the dual subgroup of $G_i$, which is also proximal and irreducible. Observe that for $g_i\in G_i$ proximal elements, $g_1\otimes \cdots \otimes g_r\in G_1\otimes \cdots \otimes G_r$ is proximal with attracting fixed point $[v_{g_1}\otimes \cdots \otimes v_{g_r}]$, $x_{g_i}^{+}=[v_{g_i}]$, $i=1,\ldots,r$. In particular, since $\Lambda_{G_i}$ spans $\mathbb{C}^{d_i}$, we conclude that $\Lambda_{G_1}\otimes \cdots \otimes \Lambda_{G_r}=\big\{[v_1\otimes \cdots \otimes v_r]:[v_i]\in \Lambda_{G_i}\big\}$ spans $\mathbb{C}^{d_1}\otimes \cdots \otimes \mathbb{C}^{d_r}$.

Similarly, since $G_i^{\ast}$ is proximal and irreducible, $\Lambda_{G_1^{\ast}}\otimes \cdots \otimes \Lambda_{G_r^{\ast}}$ also spans $\mathbb{C}^{d_1}\otimes \cdots \otimes\mathbb{C}^{d_r}$, which equivalently implies $\bigcap\big\{V_{g_1\otimes \cdots \otimes g_r}^{-}: g_i\in G_i \ \textup{proximal}\big\}=(0)$.\footnote{Here we use that for an attracting fixed point $x_{g_i^{\ast}}^+=[v_{g_i^{\ast}}]$ of a proximal element $g_i^{\ast}\in G_i^{\ast}$, we have $v_{g_i^{\ast}}^{\perp} = V_{g_i}^{-}$, where the orthogonal complement $\perp$ is with respect to the \hbox{standard Hermitian inner product on $\mathbb{C}^d$.}} Thus, if $(0)\neq V_0\subset \mathbb{C}^{d_1}\otimes \cdots \otimes \mathbb{C}^{d_r}$ is a $G_1\otimes \cdots \otimes G_r$-invariant subspace, there are $g_1,\ldots, g_r\in G$ proximal and $v_0\in V_0$ such that $v_0\notin V_{g_1\otimes \cdots \otimes g_r}^{-}$. Applying the $G_1\otimes \cdots \otimes G_r$-invariance of $V_0$ twice implies that for $x_{g_i}^{+}=[v_{g_i}]$ we have $[v_{g_1}\otimes \cdots \otimes v_{g_r}]=\lim_n (g_1^n \otimes \cdots \otimes g_r^n)v_0 \in \mathbb{P}(V_0)$ and thus for every $h_i\in G_i$, $h_1v_{g_1}\otimes \cdots \otimes h_rv_{g_r}\in V_0$. We deduce from the irreducibility of the $G_i$ that $V_0=\mathbb{C}^{d_1}\otimes \cdots \otimes \mathbb{C}^{d_r}$.\\

\noindent \textup{(ii)} Let us assume that $(0)\neq W\subset \mathbb{C}^d$ is an $N$-invariant subspace, of minimal dimension, on which $N$ acts irreducibly. Then, using that $N\unlhd G$ and induction, we may find $g_1,\ldots,g_{s}\in G$ such that $\mathbb{C}^d=\bigoplus_{i=1}^{s}g_iW$. Since attracting fixed points of proximal elements are unique, there is a non-empty subset $J\subset \{1,\ldots,s\}$ such that $\Lambda_N=\bigsqcup_{j\in J}\Lambda_N\cap \mathbb{P}(g_iW)$ and $\Lambda_N\cap \mathbb{P}(g_jW)$ is non-empty for every $j\in J$. Note that $\Lambda_N$ is $G$-invariant and $\Lambda_N\cap \mathbb{P}(g_jW)$ is acted on minimally by $N$ (since $N$ restricted to $g_jW$ acts irreducibly). Hence, every $g\in G$ has to permute the sets $\big \{\Lambda_N\cap \mathbb{P}(g_1W),\ldots, \Lambda_N\cap \mathbb{P}(g_sW)\big \}$. Thus, by passing to a finite-index subgroup $G'\unlhd G$, $G'$ has to preserve $\Lambda_N\cap \mathbb{P}(g_jW)$ for every $j$ and since $G'$ is irreducible we have $\Lambda_{G'}\subset \mathbb{P}(g_jW)$. This shows that $\textup{dim}(W)=d$ and hence $N$ is irreducible.\end{proof}

\begin{lemma}\label{non-extension}\noindent \textup{(i)} Let $\Delta<\mathsf{GL}_{d}(\mathbb{C})$ be a strongly irreducible, finitely generated subgroup, whose infinite order elements are proximal, and $K\unlhd \Delta$ an infinite finitely generated normal subgroup such that $\Delta/K$ is abelian. There exist finite-index subgroups $\Gamma\unlhd \Delta$ and $K'\unlhd K$ such that $K'$ is coabelian in $\Gamma$, and a finite-index subgroup $L$ of the fibre product $\left\{(g_1,g_2)\in \Gamma \times \Gamma: g_1^{-1}g_2\in K'\right\}$, such that there is an irreducible representation $\rho:L \rightarrow \mathsf{GL}_{r}(\mathbb{C})$, $r=d^2$,  which cannot extend to a representation of $\Gamma \times \Gamma$ into $\mathsf{GL}_{r}(\mathbb{C})$.\\
\noindent \textup{(ii)} Let $\Delta'<\mathsf{GL}_m(\mathbb{C})$ be a strongly irreducible, finitely generated subgroup, whose infinite order elements are proximal and $M$ a subgroup of the direct product of $q$ copies $\Delta' \times \cdots \times \Delta'$, containing $K'\times \cdots \times K'$, for some non-trivial infinite normal subgroup $K'\unlhd \Delta'$. Then there is an irreducible representation $\psi:L\times M \rightarrow \mathsf{GL}_{s}(\mathbb{C})$, $s=d^2m^q$, which does not extend to a representation of $\Gamma \times \Gamma \times \Delta' \times \cdots \times \Delta'$ into $\mathsf{GL}_s(\mathbb{C})$.
\end{lemma}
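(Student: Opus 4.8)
The plan is to build the desired representation as an external tensor product of the "obvious" representations coming from the inclusions into the ambient general linear groups, and then to obstruct extendability by exhibiting an element on which any hypothetical extension would be forced to act trivially, contradicting faithfulness/irreducibility considerations combined with Lemma \ref{lem:cyclic-quotient}. For part (i): by Remark \ref{rmk:linear-p-finite} we may pass to a finite-index subgroup $\Gamma \unlhd \Delta$ which is torsion-free and residually $p$-finite; set $K_\Gamma := K \cap \Gamma$, still infinite, finitely generated, with $\Gamma/K_\Gamma$ abelian. Pick $g,h \in K_\Gamma$ with $[g,h]\neq 1$ (possible since $K_\Gamma$ is infinite and, being torsion-free and not virtually cyclic — a point I will need to verify from the hypotheses, or else handle the abelian case separately — is non-abelian). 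Apply Lemma \ref{lem:cyclic-quotient} to $\Gamma$, $\phi\colon \Gamma \to \Gamma/K_\Gamma$, and the pair $g,h$ to obtain $N' = K' \unlhd \Gamma$ of finite index in $K_\Gamma$, normal in $\Gamma$, with $[g,h]\in K'$ and its image in $K'/[K',K']$ nontrivial. Then $\Gamma/K'$ is (virtually) abelian — here I may need to further intersect $K'$ with $[\Gamma,\Gamma]\cap\Gamma$ or replace $\Gamma$ by a finite-index subgroup so that $K'$ is genuinely coabelian, which is harmless. Let $L$ be the fibre product $\{(g_1,g_2)\in\Gamma\times\Gamma : g_1^{-1}g_2\in K'\}$ itself (finite index in it). The candidate representation is $\rho = (\iota\circ\pi_1)\otimes(\iota^\ast\circ\pi_2)\colon L \to \mathsf{GL}(\mathbb{C}^d\otimes\mathbb{C}^d) = \mathsf{GL}_{d^2}(\mathbb{C})$, where $\iota$ is the inclusion $\Gamma\hookrightarrow\mathsf{GL}_d(\mathbb{C})$, $\iota^\ast$ its dual, and $\pi_i$ the coordinate projections. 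Irreducibility of $\rho$ follows from Observation \ref{irreducible}(i) once one checks $\pi_i(L)$ is irreducible: $\pi_1(L) = \Gamma$ is irreducible (finite-index subgroup of strongly irreducible $\Delta$), and $\pi_2(L)=\Gamma$ likewise for the dual, and both contain proximal elements since infinite-order elements of $\Delta$ are proximal.

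The non-extendability is the heart of the argument. Suppose, for contradiction, that $\rho$ extends to $\tilde\rho\colon\Gamma\times\Gamma\to\mathsf{GL}_{d^2}(\mathbb{C})$. Consider the element $w := ([g,h],[g,h]) \in L$ (it lies in $L$ since its two coordinates are equal, hence $g_1^{-1}g_2 = 1 \in K'$) and compare it with $w' := ([g,h],1)$. The point is that $w' = w\cdot(1,[g,h]^{-1})$ and $(1,[g,h]^{-1})$ lies in $1\times K' \subset L$, so both $w'$ and $(1,[g,h])$ are in $L$; but $([g,h],1)$ and $(1,[g,h])$ are conjugate in $\Gamma\times\Gamma$ (swap factors — or more carefully, the asymmetry must be exploited through the commutator structure, so I would instead argue: the subgroup $[\Gamma,\Gamma]\times 1$ is not contained in $L\cdot Z$ for the relevant centralizer, forcing a collapse). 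Concretely, the cleanest route: any extension $\tilde\rho$ restricted to $1\times\Gamma$ must be a representation, and on $1\times K'$ it agrees with $\rho|_{1\times K'} = \mathbf{1}_d \otimes (\iota^\ast|_{K'})$; meanwhile $\tilde\rho|_{\Gamma\times 1}$ on $K'\times 1$ equals $(\iota|_{K'})\otimes\mathbf{1}_d$. Since $\Gamma\times 1$ and $1\times\Gamma$ commute, $\tilde\rho(\Gamma\times 1)$ and $\tilde\rho(1\times\Gamma)$ commute, which by irreducibility and Schur forces a tensor decomposition compatible with the two factors; tracking the abelianization $K'/[K',K']$ and using that $[g,h]$ survives there while $[g,h]$ is a commutator in $\Gamma$ (so dies in any abelian quotient of $\Gamma$) produces the contradiction — the extension would have to send $[g,h]$ in one factor to something nontrivial (detected via the restriction of $\iota^\ast$) yet, being a commutator of the $\Gamma$-factor, it is constrained. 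I expect the bookkeeping of \emph{which} commutator survives in \emph{which} abelianization, and arranging $K'$ to be simultaneously coabelian and to retain a surviving commutator, to be the main obstacle; this is precisely what Lemma \ref{lem:cyclic-quotient} is designed to supply, so the proof should be a careful assembly rather than a new idea.

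For part (ii), the plan is to tensor the representation $\rho$ from (i) with the external tensor product of the inclusions $\Delta'\hookrightarrow\mathsf{GL}_m(\mathbb{C})$ over the $q$ factors, restricted to $M$: that is, $\psi := \rho\boxtimes(\iota'\otimes\cdots\otimes\iota')|_{L\times M}$ acting on $\mathbb{C}^{d^2}\otimes(\mathbb{C}^m)^{\otimes q}$, so $s = d^2 m^q$. Irreducibility follows from Observation \ref{irreducible}(i) applied to the factors $L$ (irreducible via $\rho$, by part (i)) and the $q$ copies of $\Delta'$ (strongly irreducible, infinite-order elements proximal), noting $M$ contains $K'\times\cdots\times K'$ with $K'\unlhd\Delta'$ infinite, so by Observation \ref{irreducible}(ii) the restriction to $M$ is still irreducible on $(\mathbb{C}^m)^{\otimes q}$ — here I need that $K'$, being infinite and normal in the strongly irreducible $\Delta'$, contains a proximal element, which follows since some infinite-order element of $\Delta'$ lies in $K'$ (as $\Delta'/K'$... actually one needs $K'$ infinite forces an infinite-order element by torsion-freeness after passing to finite index, which is fine). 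Non-extendability is then inherited: if $\psi$ extended to $\Gamma\times\Gamma\times(\Delta')^q$, restricting to the subgroup $\Gamma\times\Gamma\times\{1\}^q$ and using that the $(\Delta')^q$-part of $\psi$ is irreducible and nontrivial would, via a Schur-type argument, produce an extension of $\rho$ to $\Gamma\times\Gamma$, contradicting (i). I would phrase this last inheritance step carefully as: an irreducible representation of a direct product is an external tensor product of irreducibles of the factors, so an extension of $\psi$ would restrict to give the forbidden extension of $\rho$.
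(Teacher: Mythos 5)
Your candidate representation for part (i), $\rho(g_1,g_2)=\iota(g_1)\otimes\iota^{\ast}(g_2)$, cannot possibly fail to extend: it is, by construction, the restriction to $L$ of the representation $\overline{\rho}(g_1,g_2)=\iota(g_1)\otimes\iota^{\ast}(g_2)$ of all of $\Gamma\times\Gamma$. Any argument that concludes this $\rho$ is non-extendable must therefore contain an error, and indeed the discussion you give of the ``heart of the argument'' never produces a contradiction — the Schur-type tensor decomposition you invoke is perfectly compatible with the obvious extension. The missing idea is that the representation must be \emph{twisted by a character of $L$ that does not extend to $\Gamma\times\Gamma$}. Concretely, the paper applies Lemma~\ref{lem:cyclic-quotient} not to $\Gamma$ with the coabelian subgroup $K_{\Gamma}$ (as you propose), but to $\Gamma\times\Gamma$ with the coabelian subgroup $P$ itself and the element $(1,[a,b])$ for non-commuting $a,b\in\Gamma$. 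This produces a finite-index subgroup $L<P$ containing $(1,[a,b])$ and a homomorphism $\pi\colon L\to\mathsf{GL}_1(\mathbb{C})$ with $\pi((1,[a,b]))\neq 1$. The representation is then $\rho=\pi\cdot\rho_0|_L$ where $\rho_0(g_1,g_2)=g_1\otimes g_2$. If $\overline{\rho}$ extended $\rho$, then conjugating $\rho(K_0\times K_0)$ (irreducible by Observations~\ref{irreducible}(i)--(ii), with $K_0\times K_0\leq\ker\pi$) shows $\rho_0(g_1,g_2)^{-1}\overline{\rho}(g_1,g_2)$ centralizes an irreducible group, hence is a scalar $\lambda(g_1,g_2)$. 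Then $\lambda\colon\Gamma\times\Gamma\to\mathsf{GL}_1(\mathbb{C})$ is a homomorphism extending $\pi$; being abelian-valued it kills $(1,[a,b])=[(1,a),(1,b)]$, contradicting $\pi((1,[a,b]))\neq 1$.

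Two secondary points. First, you need non-commuting elements, but you look for them inside $K_{\Gamma}$; the paper picks them in $\Gamma$ and uses $(1,[a,b])\in L$, which sidesteps your flagged concern (though in fact $K_\Gamma$, being an infinite normal subgroup of a strongly irreducible group, is itself irreducible and hence non-abelian by Observation~\ref{irreducible}(ii)). Second, $L$ cannot be taken to be the \emph{full} fibre product as you suggest: Lemma~\ref{lem:cyclic-quotient} only furnishes a finite-index subgroup of $P$ on which the desired character exists, which is precisely why the statement of Lemma~\ref{non-extension} allows for a finite-index $L$. For part (ii), once $\rho$ is the correctly twisted representation, your inheritance strategy — restrict a hypothetical extension to $\Gamma\times\Gamma\times\{1\}$ and run a Schur argument on the irreducible $K_0\times K_0\times K'\times\cdots\times K'$ to produce a character of $\Gamma\times\Gamma$ extending $\pi$ — is essentially what the paper does, so that outline is sound modulo the fix in (i).
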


\begin{proof} (i) By \Cref{rmk:linear-p-finite}, we may find a prime $p\in \mathbb{N}$ and a finite-index subgroup $\Gamma \unlhd \Delta$ such that $\Gamma$ is torsion-free and residually $p$-finite, and $K':=K\cap \Gamma$ is a coabelian subgroup of $\Gamma$. Since $[\Gamma,\Gamma]$ is a subgroup of $K'$, the fiber product $P:=\left\{(g_1,g_2)\in \Gamma \times \Gamma: g_1^{-1}g_2\in K\right\}$ is a normal subgroup of $\Gamma \times \Gamma$ and the quotient $A:=(\Gamma \times \Gamma)/P$ is abelian.

Observe that $\Gamma\times \Gamma$ is residually $p$-finite and let $\phi:\Gamma \times \Gamma \rightarrow A$ be the natural projection. Fix two non-commuting elements $a,b\in \Gamma$ such that $\left(1,[a,b]\right)\in \Gamma \times \Gamma\smallsetminus \{(1,1)\}$. By applying Lemma \ref{lem:cyclic-quotient} for the group $\Gamma \times \Gamma$, the coabelian subgroup $P$ and the homomorphism $\phi$, we obtain a finite-index subgroup $L<P$ with $(1,[a,b])\in L$, such that the image of $(1,[a,b])$ in $L/[L,L]$ is non-trivial. In particular, there is a homomorphism $\pi:L\rightarrow \mathsf{GL}_1(\mathbb{C})$ such that $\pi\left((1,[a,b])\right)\neq 1$.

The strong irreducibility of $\Delta<\mathsf{GL}_d(\mathbb{C})$ implies that $\Gamma<\mathsf{GL}_d(\mathbb{C})$ is also strongly irreducible. Since every infinite order element of $\Gamma$ is proximal, every non-trivial normal subgroup $N \unlhd \Gamma$ contains a proximal element. Using strong irreducibility of $\Gamma$, by Observation \ref{irreducible} (ii), $N<\mathsf{GL}_d(\mathbb{C})$ is irreducible. Now consider the tensor product $\rho_0:\Gamma \times \Gamma \rightarrow \mathsf{GL}(\mathbb{C}^{d}\otimes \mathbb{C}^{d})$, $$\rho_0(g_1,g_2)=g_1\otimes g_2,\ (g_1,g_2)\in \Gamma \times \Gamma,$$ and define a representation $\rho:L\rightarrow \mathsf{GL}(\mathbb{C}^{d}\otimes \mathbb{C}^{d})$ as follows $$\rho(h_1,h_2)=\pi(h_1,h_2)\rho_0(h_1,h_2), \ (h_1,h_2)\in L.$$

We claim that $\rho$ is an irreducible representation that does not extend to a representation of $\Gamma \times \Gamma$. We first prove irreducibility by showing that $\rho$ restricts to an irreducible representation on a suitable subgroup. Since $L$ and $\textup{ker}(\pi)$ are finite-index subgroups of $P$, there is a finite-index subgroup $K_0<K'$ such that $K_0\times K_0$ is a normal subgroup of $\Gamma \times \Gamma$ contained in $\textup{ker}(\pi)$. In particular, $K_0<\mathsf{GL}_{d}(\mathbb{C})$ is irreducible and proximal, and $K_0\unlhd\Gamma$ is normal. Thus, Observation \ref{irreducible} (i) implies that  $\rho_0$ (and $\rho$) restricted to $K_0\times K_0$ is irreducible. 

Now suppose that there exists a representation $\overline{\rho}:\Gamma \times \Gamma \rightarrow \mathsf{GL}(\mathbb{C}^{d}\otimes \mathbb{C}^{d})$ extending $\rho$. Observe that for every $(g_1,g_2)\in \Gamma \times \Gamma$ and $(h_1,h_2)\in K_0\times K_0$, we have $(g_1h_1g_1^{-1},g_2h_2g_2^{-1})\in \textup{ker}(\pi)$ and hence \begin{align*}\overline{\rho}(g_1,g_2)\rho(h_1,h_2)\overline{\rho}(g_1,g_2)^{-1}&=\rho(g_1h_1g_1^{-1},g_2h_2g_2^{-1})\\ &=\rho_0(g_1h_1g_1^{-1},g_2h_2g_2^{-1})\\ &=\rho_0(g_1,g_2)\rho(h_1,h_2)\rho_0(g_1,g_2)^{-1}.\end{align*} 

Thus, $\rho_0(g_1,g_2)^{-1}\overline{\rho}(g_1,g_2)$ centralizes $\rho(K_0\times K_0)$, for every $(g_1,g_2)\in \Gamma \times \Gamma$. Since $\rho(K_0\times K_0)$ is irreducible, we deduce that there is $\lambda(g_1,g_2)\in \mathbb{C}^{\ast}$ such that $$\overline{\rho}(g_1,g_2)=\lambda(g_1,g_2)\rho_0(g_1,g_2).$$ Since $\overline{\rho}$ and $\rho_0$ are group homomorphisms, the map $\lambda:\Gamma \times \Gamma \rightarrow \mathsf{GL}_1(\mathbb{C})$, $(g_1,g_2)\mapsto \lambda(g_1,g_2)$, is a group homomorphism. In particular, since $\overline{\rho}$ is an extension of $\rho$, we have that $$\lambda(g_1,g_2)=\pi(g_1,g_2), \ \mbox{ for all } (g_1,g_2)\in L.$$ However, by the choice of $\pi$, $\pi((1,[a,b]))\neq 1$ while $\lambda((1,[a,b]))=[\lambda(1,a),\lambda(1,b)]=1$. This is a contradiction, hence the conclusion follows.
\medskip

\noindent \textup{(ii)} Let $\pi:L\rightarrow \mathsf{GL}_1(\mathbb{C})$ be the homomorphism from (i) such that $\pi((1,[a,b]))\neq 1$. Define a representation $\rho_0':\Gamma \times \Gamma \times \Delta'\times \cdots \times \Delta'\rightarrow \mathsf{GL}(W)$, $W:=\mathbb{C}^d\otimes \mathbb{C}^d\otimes \mathbb{C}^m\otimes \cdots \otimes \mathbb{C}^m$, by $$\rho_0'(g_1,g_2,g_3,\ldots,g_{q+2})=g_1\otimes g_2\otimes g_3\otimes \cdots \otimes g_{q+2}$$ for $g_1,g_2\in \Gamma, g_3,\ldots,g_q\in \Delta'$. 

We claim that the representation $\rho':L\times M\rightarrow \mathsf{GL}(W)$, $$\rho'(g_1,g_2,g)=\pi(g_1,g_2)\rho_0'(g_1,g_2,g), \ (g_1,g_2)\in L, \ g\in M,$$ is irreducible and does not extend to a representation of $\Gamma \times \Gamma \times M$ into $\mathsf{GL}(W)$. The proof is similar to the proof of (i).

We first prove irreducibility. Let $K_0\times K_0$ be the subgroup of $L$ from (i) such that $K_0\times K_0<\textup{ker}(\pi)$, and $K'\unlhd M$, such that $K'\times \cdots \times K' <M$. As in (i), $K'<\mathsf{GL}_m(\mathbb{C})$ is irreducible and proximal. Thus, Observation \ref{irreducible} (ii) implies that $\rho_0'$ (and $\rho'$) restricted to $K_0\times K_0 \times K'\times \cdots \times K'$ is irreducible. 

Now suppose that $\rho'$ extends to a representation $\overline{\rho}'$ of $\Gamma \times \Gamma \times M$ in $\mathsf{GL}(W)$. Then for every $(g_1,g_2)\in \Gamma \times \Gamma$ and $(h_1,h_2,\ldots, h_{p+2})\in K_0\times K_0 \times K'\times \cdots \times K'$ we argue as in (i) that  $$\overline{\rho}'(g_1,g_2,1,\ldots,1)^{-1}\rho_0'(g_1,g_2,1\ldots, 1)\in \mathsf{GL}(W)$$ centralizes $\rho'(h_1,h_2,\ldots, h_{p+2})$. By irreducibility of $\rho_0'$ on $K_0\times K_0 \times K'\times \cdots \times K'$, there is a group homomorphism $\lambda':\Gamma \times \Gamma \rightarrow \mathsf{GL}_1(\mathbb{C})$ such that $$\overline{\rho}'(g_1,g_2,1,\ldots,1)=\lambda'(g_1,g_2)\rho_0'(g_1,g_2,1\ldots, 1).$$ Since $\overline{\rho}'$ extends $\rho'$ we have $$\lambda'(g_1,g_2)=\pi(g_1,g_2), \ \mbox{ for all } (g_1,g_2)\in L.$$ This is a contradiction, since we showed in the proof of (i) that $\pi$ does not extend to $\Gamma \times \Gamma$.
\end{proof}

\section{Fibre products with exotic finiteness properties and without QI-representations}
\label{sec:Proof-of-Main-results}

In this section we will prove the main results of this paper by combining the results of the previous sections. We start by proving \Cref{thm:Main-fibre-product}. \Cref{thm:Main} will then be a consequence of modifying the construction provided by \Cref{thm:Main-fibre-product} in a way that allows us to vary the finiteness properties and that obstructs the extendability of faithful representations. We will conclude the section with a few remarks, including a second family of examples of groups with exotic finiteness properties, which do not QI-embed in any general linear group (see \Cref{unit-tangent}).

\begin{proof}[{Proof of \Cref{thm:Main-fibre-product}}]
    The existence of lattices $\Gamma<\mathsf{PU}(n,1)$, for every $n>1$, which satisfy the hypotheses follows from \Cref{thm:LIP-existence}. 
    
    We now prove (1). \Cref{thm:coabelian-n-n+1-n+2} implies that $P$ is of type $\mathcal{F}_{2n-1}$. To see that $P$ is not of type $\mathcal{F}_{2n}$, we first observe that $P$ is the kernel of the surjective homomorphism $\Gamma \times \Gamma \to \mathbb{Z}^2$, $(g_1,g_2)\mapsto \phi(g_1)-\phi(g_2)$. Then the assertion follows by applying a Theorem of L\"uck \cite[Theorem 3.3(4)]{Luc-98}, which says that such a kernel is not $\mathcal{F}_{2n}$ if $\Gamma\times \Gamma$ has non-trivial $\ell^2$-Betti number in degree $2n$, where the latter follows from the K\"unneth formula for $\ell^2$-Betti numbers \cite[Theorem 6.54(5)]{Luc-02} and the fact that cocompact lattices in $\mathsf{PU}(n,1)$ have non-trivial $n$-th $\ell^2$-Betti number \cite{Bor-85}.

    Finally, Assertion (2) is a direct consequence of \Cref{thm:Main-fibre-product-2}, since cocompact lattices in $\mathsf{PU}(n,1)$ are hyperbolic. This completes the proof.
\end{proof}

To deduce \Cref{thm:Main} from \Cref{thm:Main-fibre-product} we need the following elementary observation.

\begin{observation}\label{finite-index-qie} Let $P$ be a finitely generated group which does not admit a linear representation, over a local field $k$, which is a quasi-isometric embedding. Then for every finite index subgroup $P'<P$, every linear representation of $P'$ over a local field $k$ is not a quasi-isometric embedding.
\end{observation}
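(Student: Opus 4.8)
The plan is to reduce the statement for $P'$ to the one for $P$ by a standard argument showing that a finitely generated group and any finite index subgroup are quasi-isometric in a way compatible with the Cartan projection. Concretely, suppose for contradiction that $\rho': P' \to \mathsf{GL}_d(k)$ is a representation of a finite index subgroup $P' < P$ which is a quasi-isometric embedding, where $k$ is some local field. I want to produce from $\rho'$ a representation of $P$ over a local field which is a quasi-isometric embedding, contradicting the hypothesis on $P$.

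First I would recall that since $P'$ has finite index in $P$, the inclusion $(P', d_{P'}) \hookrightarrow (P, d_P)$ is a quasi-isometry for any choices of word metrics: this is classical, e.g.\ \cite[Section 7.2]{Geo-08} or standard \v{S}varc--Milnor considerations. Hence $|\cdot|_{P'}$ and $|\cdot|_{P}$ restricted to $P'$ are bi-Lipschitz equivalent up to additive constants, and every element of $P$ is within bounded distance of $P'$. The natural candidate for a representation of $P$ is the induced representation $\rho := \mathrm{Ind}_{P'}^{P}(\rho')$, which has dimension $d\cdot[P:P']$ and is defined over the same field $k$. The key point is then to control the Cartan projection of $\rho$ on $P$ in terms of that of $\rho'$ on $P'$.

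The main step is the comparison $\|\mu(\rho(g))\| \asymp \|\mu(\rho'(g'))\|$ where $g' \in P'$ is at bounded distance from $g \in P$. For this I would use the description of quasi-isometric embeddings via orbit maps into the symmetric space or Bruhat--Tits building $X_d = \mathsf{GL}_d(k)/\mathsf{K}$ recalled in the excerpt: $\rho'$ is a quasi-isometric embedding iff an orbit map $o_{\rho',x_0}: P' \to X_d$ is a quasi-isometric embedding. The induced representation acts on $(k^d)^{[P:P']}$, and picking a basepoint $y_0$ corresponding to an orthogonal/standard sum of copies of $x_0$, the orbit map $o_{\rho, y_0}: P \to X_{d[P:P']}$ restricted to $P'$ is, up to additive constants, a "diagonal-like'' copy of $o_{\rho',x_0}$ (each $h \in P'$ acts on the summands by a permuted collection of $\rho'(c^{-1}hc)$ for coset representatives $c$, and $|c^{-1}hc|_{P'}$ is controlled by $|h|_{P'}$ since there are finitely many $c$). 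Therefore $o_{\rho,y_0}|_{P'}$ is a quasi-isometric embedding of $P'$, and since $P' \hookrightarrow P$ is a quasi-isometry and $o_{\rho,y_0}$ is Lipschitz (any orbit map of a finitely generated group is), $o_{\rho,y_0}: P \to X_{d[P:P']}$ is a quasi-isometric embedding of $P$. Thus $\rho$ is a quasi-isometric embedding of $P$ into $\mathsf{GL}_{d[P:P']}(k)$, contradicting the hypothesis.

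The main obstacle, and the only point requiring genuine care, is the claim that conjugation by the finitely many coset representatives does not distort word length, i.e.\ that $\max_c |c^{-1}hc|_{P'} \le c_0 |h|_{P'} + c_0$ for a uniform constant $c_0$; this is immediate since conjugation by a fixed element is a bi-Lipschitz map of $(P', d_{P'})$ and there are only finitely many $c$. Everything else is bookkeeping with the induced representation and the standard equivalence between quasi-isometric embeddings of groups and of their orbit maps. An alternative, perhaps cleaner, route that avoids induced representations entirely: extend $\rho'$ to $P$ by the regular-representation trick only if needed, but in fact one does not even need a representation of $P$ at all — one can instead observe directly that the hypothesis on $P$ is equivalent to a lower bound on $\|\mu(\rho'(\cdot))\|$ failing along \emph{some} sequence for \emph{every} $\rho'$ of $P$, but since we are given the statement only for $P$, the induced-representation argument is the honest way to transfer it. I would therefore present the induced-representation argument as the proof.
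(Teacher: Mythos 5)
Your proposal is correct and takes essentially the same route as the paper: both pass to the induced representation $\mathrm{Ind}_{P'}^{P}(\rho')$, observe that its restriction to $P'$ is (block-)controlled by conjugates of $\rho'$ by finitely many coset representatives and hence is a quasi-isometric embedding, and conclude via the finite-index quasi-isometry $P'\hookrightarrow P$. The only cosmetic difference is that the paper first replaces $P'$ by a further finite-index subgroup that is normal in $P$, so that the restriction of the induced representation to $P'$ is a genuine direct sum $\bigoplus_i\rho_{g_i}$ with no permutation of blocks; you instead handle the permutation directly, where you should write the block on the $i$-th coset as $\rho'(c_{j(i)}^{-1}hc_i)$ rather than $\rho'(c_i^{-1}hc_i)$, though this does not affect the word-length estimate since only finitely many coset representatives occur.
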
 

\begin{proof}
Suppose that the claim is false and that there is a representation $\rho:P'\rightarrow \mathsf{GL}_{d}(k)$, which is a quasi-isometric embedding. By passing to a deeper finite-index subgroup of $P'$, we may assume that $P'$ is normal in $P$. Since $\rho$ is a quasi-isometric embedding, for every $g\in P$, the representation $\rho_g:P'\rightarrow \mathsf{GL}_d(k)$, $h\mapsto\rho_g(h)=\rho(ghg^{-1})$, is also a quasi-isometric embedding. There is an induced representation $\rho':P\rightarrow \mathsf{GL}_{dq}(k)$, $q:=[P:P']$ (see \cite[Section 3.3]{Fulton-Harris}), with the property that its restriction to $P'$ is a direct sum of representations of the form $\{\rho_{g_i}\}_{i=1}^{q}$, for some $g_1,\ldots, g_{q}\in P$. In particular, $\rho'$ restricted to $P'$ (and hence $\rho'$ itself) is a quasi-isometric embedding, which is a contradiction. The observation follows.
\end{proof}

Recall that for $n\in \mathbb{N}$, $SB_n$ denotes the  $n$-th Stallings--Bieri group which is the kernel of a factorwise surjective homomorphism $F_2 \times \cdots \times F_2 \to \mathbb{Z}$ from a direct product of $n$ $2$-generated free groups onto $\mathbb{Z}$. We can now deduce \Cref{thm:Main} from \Cref{thm:Main-fibre-product}.

\begin{proof}[Proof of Theorem \ref{thm:Main}] Let $n>1$, let $\Gamma<\mathsf{PU}(n,1)$ be a torsion-free residually $p$-finite lattice, and $\phi: \Gamma \to \mathbb{Z}^2$ a surjective homomorphism, satisfying the hypotheses of \Cref{thm:Main-fibre-product}. Let us consider the representation $\rho_n:\mathsf{PU}(n,1)\rightarrow \mathsf{GL}_{(n+1)^2}(\mathbb{C})$, $\rho_n([g])=g\otimes \overline{g}, [g]\in \mathsf{PU}(n,1)$, which maps a proximal matrix $[g]\in \mathsf{PU}(n,1)$ to a proximal matrix $\rho_n(g)\in \mathsf{GL}_{(n+1)^2}(\mathbb{C})$. Note that for $[g]\in \mathsf{PU}(n,1)$ we have  $||\mu(\rho_n([g]))||\geq \log\sigma_1(g\otimes \overline{g})= 2\log \sigma_1(g)$. Since $\Gamma$ is quasi-isometrically embedded into $\mathsf{PU}(n,1)$, there are $C,c>0$ such that $\log \sigma_1(\gamma)\geq \frac{1}{2}||\mu(\gamma)||\geq c|\gamma|_{\Gamma}-C$, for every $\gamma\in \Gamma$ and some word metric $|.|_{\Gamma}$ on $\Gamma$; here we use that $\gamma$ is a hyperbolic element of $\mathsf{PU}(n,1)$ and thus $\sigma_1(\gamma)=1/\sigma_{n+1}(\gamma)$ and $\sigma_i(\gamma)=1$ for $i\neq 1,n+1$. Thus the restriction of $\rho_n$ to $\Gamma$ defines a quasi-isometric embedding of $\Gamma$ into $\mathsf{GL}_{(n+1)^2}(\mathbb{C})$, and hence $\Gamma \times \Gamma$ admits a faithful representation into $\mathsf{GL}_{2(n+1)^2}(\mathbb{C})$ which is a quasi-isometric embedding.

The restriction $\tau_n:\mathsf{PU}(n,1)\rightarrow \mathsf{GL}(V)$ of $\rho_n$ on the linear span $V\subset \mathbb{C}^{n+1}\otimes \mathbb{C}^{n+1}$ of the proximal limit set of $\rho_n(\mathsf{PU}(n,1))$ is an irreducible proximal representation restricted to any lattice of $\mathsf{PU}(n,1)$. Since $\Gamma<\mathsf{PU}(n,1)$ is torsion-free and cocompact, all its non-trivial elements are hyperbolic and thus proximal. Thus, we can apply Lemma \ref{non-extension}(i) to the strongly irreducible group $\tau_n(\Gamma)<\mathsf{GL}_d(V)$ and its infinite coabelian subgroup $\tau_n(\textup{ker}(\phi))$. This implies that there is a finite-index subgroup $L<P$ of the fiber product $P=\big\{(g_1,g_2)\in \Gamma\times \Gamma: \phi(g_1)=\phi(g_2)\big\}$ and a faithful irreducible representation $\rho:L\rightarrow \mathsf{GL}_{r}(\mathbb{C})$, $r=\textup{dim}(V)^2$, which does not extend to a representation of $\Gamma\times \Gamma$ into $\mathsf{GL}_r(\mathbb{C})$. Since $L$ has finite index in $P$, we deduce from Theorem \ref{thm:Main-fibre-product} (1) that $L$ is of type $\mathcal{F}_{2n-1}$ and not of type $\mathcal{F}_{2n}$. In addition, by Observation \ref{finite-index-qie}, $L$ fails to admit a representation over a local field which is a quasi-isometric embedding. Thus, if $m=2n$ is even, the ${\rm CAT}(0)$ group $G=\Gamma \times \Gamma$ and its subgroup $L<G$ satisfy the conclusion of the theorem and we are done.

If $m\geq 3$ is odd, then we choose $n\in \mathbb{N}$ with $m=2n-1$, $$G=\Gamma\times \Gamma\times \underbrace{F_2\times \cdots \times F_2}_{m-\textup{times}}$$ and we claim that $L'=L\times SB_{m}<G$ satisfies the conclusion. Since $SB_{m}$ is of type $\mathcal{F}_{m-1}$ and not $\mathcal{F}_{m}$ and $2n-1>m-1$, by Proposition \ref{product-typeF_m}, $L'$ is of type $\mathcal{F}_{m-1}$ and not $\mathcal{F}_{m}$, proving (1). In addition, $L\times SB_m$ does not admit a representation over a local field which is a quasi-isometric embedding, since the quasi-isometrically embedded subgroup $L$ does not, proving (3). Since there is a faithful representation $F_2 \xhookrightarrow{} \mathsf{SL}_2(\mathbb{C})$ which is a quasi-isometric embedding all of whose elements are proximal and whose image is strongly irreducible (e.g. take $F_2$ embedded as a quasi-convex free subgroup of a uniform lattice in $\mathsf{SL}_2(\mathbb{R})$) we conclude that (2) holds.

Now note that $SB_{m}$ contains the product $\gamma_2(F_2)\times \cdots \times \gamma_2(F_2)$. So we can apply Lemma \ref{non-extension} (ii) to $\Delta'\cong F_2$, yielding a faithful irreducible representation $\rho:L\times SB_{m}\rightarrow \mathsf{GL}_{\ell}(\mathbb{C})$, $\ell=2^{m}d^2$, which does not extend to a representation of $G$ into $\mathsf{GL}_{\ell}(\mathbb{C})$. This proves (4), thus completing the proof.\end{proof}

We end this work with a few remarks on further examples and generalizations of our results. We start by recording a second family of examples of groups with arbitrary exotic finiteness properties that do not admit quasi-isometrically embedded representations into any general linear group over a local field. We emphasize that in contrast to our examples, they are not subgroups of ${\rm CAT}(0)$ groups. We thank Sami Douba for pointing this family out to us.

\begin{rmk} \label{unit-tangent} \normalfont{The symmetric space (resp. Bruhat--Tits building) $(X,\mathsf{d})$ associated with $\mathsf{GL}_d(k)$ is ${\rm CAT}(0)$. Thus, a source of examples of groups that fail to admit  quasi-isometric embeddings into any general linear group over a local field are groups $G$ which do not admit quasi-isometric embeddings by representations into the isometry group ${\rm Isom} (X)$ of any complete ${\rm CAT}(0)$ space $X$. Generalising the case of general linear groups over local fields, by the latter we mean a representation $\rho : G \to {\rm Isom}(X)$ such that for every $x\in X$ the orbit map $G\to X$, $g\mapsto \rho(g)\cdot x$ is a quasi-isometric embedding. Note that such groups can clearly not be subgroups of ${\rm CAT}(0)$ groups. A family of such groups is given by groups $G$ which contain an undistorted infinite cyclic subgroup $\langle t\rangle< G$ such that the translation length of $t$ is zero for any action of $G$ on a ${\rm CAT}(0)$ space. To see this, observe that if $|\cdot|_{G}$ is a word metric on $G$, then by undistortedness $\lim_n \frac{1}{n}|t^n|_G>0$. If $\rho: G \rightarrow {\rm Isom(X)}$ were a quasi-isometrically embedded representation, then, by definition, for any base point $x_0\in X$ the orbit map $G \to X$, $x_0 \mapsto g\cdot x_0$ is a quasi-isometric embedding. However, in this case there would be a constant $c>0$ such that the translation length satisfies the inequality
\[
    0= \inf_{x\in X} d(tx,x)\geq \lim_{n \rightarrow \infty} \frac{1}{n}d(t^nx_0,x_0) \geq c \lim_{n\rightarrow \infty} \frac{1}{n}|t^n|_G>0,
\]
where the first inequality does not depend on the choice of $x_0$ (see \cite[II.6.6]{BriHae-99} and also \cite[Lemma 2.1.1]{Douba-thesis}). This is a contradiction.
A concrete example of a group with this property is given by the fundamental group $\pi_1(T^1\Sigma)$ of the unit tangent bundle of a closed hyperbolic surface $\Sigma$, where $t$ is the central element corresponding to the Seifert fibre. The subgroup generated by $t$ is undistorted by \cite[Lemma 3.6.4]{BdHV}, while the fact that the translation length of $t\in \pi_1(T^1\Sigma)$ for any action of $\pi_1(T^1\Sigma)$ on a ${\rm CAT}(0)$ space is zero follows from the proof of \cite[Theorem 2.6]{Bridson-10} (see also \cite[Proposition 4.0.13]{Douba-thesis}). Moreover, it is well-known that $\pi_1(T^1\Sigma)$ is linear and of type $\mathcal{F}$. In particular, one can argue as in the proof of \Cref{thm:Main-fibre-product} that for every $m\in \mathbb{N}$ the direct product $\pi_1(T^1\Sigma)\times SB_m$ with the $m$-th Stallings--Bieri group is a group of type $\mathcal{F}_{m-1}$ and not $\mathcal{F}_m$ that does not quasi-isometrically embed in in any general linear group over a local field.
}
\end{rmk}

By construction, the examples in \Cref{thm:Main} and \Cref{unit-tangent} contain free abelian subgroups of rank at least $2$. For the examples in \Cref{thm:Main-fibre-product} the rank of any such subgroup is at most $2$, while for all other examples it is proportional to the $m$ such that the group is $\mathcal{F}_{m-1}$ and not $\mathcal{F}_m$. This raises the question if there are also examples for arbitrary $m\in \mathbb{N}$ so that the maximal rank of an abelian subgroup is two or even one. The following construction provides such an example for $m=1$.

\begin{rmk}{\normalfont There exist finitely generated subgroups of $\textup{CAT}(0)$ linear hyperbolic groups all of whose linear representations over any local field fail to be quasi-isometric embeddings. Indeed, fix $Q$ a finitely presented group without proper finite index subgroups and $H_2(Q,\mathbb{Z})=0$ (e.g. take $H$ to be the Higman group). Rips construction \cite{Rip-82} applied for $Q$, exhibits a short exact sequence $1\rightarrow N \rightarrow \Gamma \rightarrow Q\rightarrow 1$, where $\Gamma$ is a $C'(\frac{1}{6})$ small cancellation group (hence linear and cubulated by \cite{Ago-13,Wise-2003}) and $N\unlhd \Gamma$ is finitely generated and not finitely presented, thus not quasi-isometrically embedded. By \cite[Proposition 3.3]{BauRei-15}, the pair $(\Gamma,N)$ is a Grothendieck pair (in particular the  inclusion $N\xhookrightarrow{} \Gamma$ induces an isomorphism between profinite completions). By \cite[(1.2) and (1.3)]{Gro-70} (see also \cite[Theorem 4.2]{BasLub-00}), for any local field, any linear representation $\rho:N \rightarrow \mathsf{GL}_d(k)$ extends to a representation $\overline{\rho}:\Gamma \rightarrow \mathsf{GL}_d(k)$. In particular, if $|\cdot|_{\Gamma}$ denotes a word metric on $\Gamma$, there exist $C>1$ such that $||\mu(\rho(g))||=||\mu(\overline{\rho}(g))||\leq C|g|_{\Gamma}+C$ for every $g\in N$. This shows, that since $N$ is distorted in $\Gamma$, $\rho$ fails to be a quasi-isometric embedding.}\end{rmk}

Finally, we would like to point out that there was no loss of generality in restricting to general linear groups, instead of semi-simple Lie groups over local fields.

\begin{rmk} \normalfont{If $\mathsf{G}$ is an arbitrary semi-simple Lie group over a local field $k$, then there is an embedding $\mathsf{G}\hookrightarrow \mathsf{GL}_d(k)$ such that the Cartan projection on $\mathsf{G}$ is the restriction of the Cartan projection on $\mathsf{GL}_d(k)$. In particular, every quasi-isometrically embedded representation of a discrete group $L$ to $\mathsf{G}$ induces a quasi-isometrically embedded representation of $L$ to $\mathsf{GL}_d(k)$. Thus, \Cref{thm:Main}(3) remains true if we replace $\mathsf{GL}_d(k)$ by $\mathsf{G}$.}\end{rmk}

\bibliography{References}
\bibliographystyle{alpha}
\end{document}